\documentclass{article}
\usepackage[utf8]{inputenc}
\usepackage[T1]{fontenc}
\usepackage{url}

\usepackage[pdftex,breaklinks,pdfpagemode={None},pdfstartview={FitH},
            pdfview={FitH},colorlinks,linkcolor={black},
            citecolor={black}, urlcolor={black}]{hyperref}
\usepackage{graphicx} 
\usepackage{amsthm}
\usepackage{amssymb,amsmath,xcolor,colortbl}
\usepackage{bbm}
\usepackage{fdsymbol}
\usepackage{bm}
\usepackage{enumitem}
\usepackage[sort&compress,numbers]{natbib}
\usepackage[singlelinecheck=false]{caption}
\usepackage[section]{placeins} 
\usepackage{float}

\newtheorem{theorem}{Theorem}[section]

\newtheorem{lemma}[theorem]{Lemma}
\newtheorem{conjecture}{Conjecture}
\newtheorem{defin}[theorem]{Definition}

\usepackage{tikz}
\usetikzlibrary{graphs,decorations.pathmorphing, decorations.pathreplacing, angles, quotes, shapes, arrows, topaths, calc, patterns, backgrounds, positioning, arrows, shapes.multipart}

\tikzset{stt/.style = {circle,draw = teal!100,fill = teal!100,inner sep = 1.5pt, outer sep = 2pt}}
\tikzset{stt1/.style = {circle,draw=black,thick,inner sep = 1.5pt, outer sep = 2pt}}  
\tikzset{stw/.style = {circle,thick,draw = black,inner sep = 0pt, outer sep = 2pt, minimum size = 13pt}}
\tikzset{dot/.style = {circle,fill = black,inner sep = .7pt}}
\tikzset{dot2/.style = {circle,fill = black,inner sep = .9pt,outer sep = 2pt}}
\tikzset{ellip1/.style ={draw,thick,ellipse, minimum width=2cm, minimum height=1cm,outer sep =3pt}}
\tikzset{ellip2/.style ={draw,thick,ellipse, minimum width=1.5cm, minimum height=1cm,outer sep =3pt}}
\tikzset{arr/.style = {thick,-latex}}
\tikzset{edg/.style = {thick}}

\newcommand{\email}[1]{\href{mailto:#1}{\tt#1}}

\newcommand{\ed}{\hspace{-1.4pt}\bm{-}\hspace{-1.4pt}}
\newcommand{\ra}{\hspace{-2.3pt}\rightarrow\hspace{-2.3pt}}
\newcommand{\la}{\hspace{-2.3pt}\leftarrow\hspace{-2.3pt}}
\newcommand{\edb}{\bm{-}}
\newcommand{\rab}{\rightarrow}
\newcommand{\lab}{\leftarrow}
\newcommand{\MB}{\textup{MB}}
\newcommand{\AHM}{\textup{AHM}}
\newcommand{\EB}{\textup{EB}}
\newcommand{\BCW}{\textup{BCW}}
\newcommand{\smp}{\raisebox{0.25ex}{\scalebox{0.7}{+}}}
\newcommand{\sh}{\cellcolor{gray!50}}
\newcommand{\cell}[3]{$#1_{\raisebox{0.6ex}{\scalebox{0.5}{#2}}}^{\raisebox{-0.6ex}{\scalebox{0.5}{#3}}}$}
\newcommand{\cellb}[3]{\cell{\bm{#1}}{#2}{#3}}
\newcommand{\cellbb}[3]{\cell{\bm{#1}}{#2}{{#3}}} 
\newcommand{\cellsh}[3]{\sh\cell{#1}{#2}{#3}}
\newcommand{\cellshb}[3]{\sh\cellb{#1}{#2}{#3}}
\newcommand{\cellshbb}[3]{\sh\cellbb{#1}{#2}{#3}}

\newcommand{\utA}{\begin{tikzpicture}[baseline=0ex, scale=0.5]
    \node[inner sep = 2pt] (a) at (0,0){\scriptsize{$a$}}; 
    \node[inner sep = 2pt] (w) at (1.5,0.5){$_{w}$}; 
    \node[inner sep = 3pt] (b) at (3,0){$_{b}$}; 
    \draw[-latex] (a)--(b);\draw[-latex] (a)--(w);\draw[-latex] (w)--(b);
\end{tikzpicture}}
\newcommand{\utB}{\begin{tikzpicture}[baseline=0ex, scale=0.5]
    \node[inner sep = 2pt] (a) at (0,0){\scriptsize{$a$}}; 
    \node[inner sep = 2pt] (w) at (1.5,0.5){$_{w}$}; 
    \node[inner sep = 3pt] (b) at (3,0){$_{b}$}; 
    \draw[-latex] (a)--(b);\draw (a)--(w);\draw[-latex] (w)--(b);
\end{tikzpicture}}
\newcommand{\utC}{\begin{tikzpicture}[baseline=0ex, scale=0.5]
    \node[inner sep = 2pt] (a) at (0,0){\scriptsize{$a$}}; 
    \node[inner sep = 2pt] (w) at (1.5,0.5){$_{w}$}; 
    \node[inner sep = 3pt] (b) at (3,0){$_{b}$}; 
    \draw[-latex] (a)--(b);\draw[-latex] (a)--(w);\draw[] (w)--(b);
\end{tikzpicture}}

\definecolor{azulito}{rgb}{0.9,0.9,1.0} 

\date{}
\begin{document}
\title{New mixed cages, bounds and \\computational results.}

\author{C. M. De la Cruz$^{1\textup{--}5}$ \and M. A. Pizaña$^{1\textup{--}4}$}

\maketitle

\footnotetext[1]{Universidad Autónoma Metropolitana. \href{mailto:clau_mar@ciencias.unam.mx}{\tt clau\_mar@ciencias.unam.mx}, \email{mpizana@gmail.com}}
\footnotetext[2]{Partially supported by PAPIIT-M\'exico, grant IN113324}
 \footnotetext[3]{Partially supported by SECIHTI, grant CBF 2023-2024-552.}
\footnotetext[4]{Partially supported by CONAHCYT, grant A1-S-45528.}
\footnotetext[5]{Partially supported by CONAHCYT, grant 799875.}
\renewcommand{\thefootnote}{\arabic{footnote}}

\begin{abstract}
  An $(r,z,g)$-mixed cage is a mixed graph that is $r$-regular in
  edges, $z$-regular in arcs, of girth $g$ and of minimum order
  $n=n(r,z,g)$.  We give a general lower bound for $n(r,z,g)$. In the
  case $g=3$, we determine the exact value $n(r,z,3)=r+2z+1$. For
  $g=4$ we give two improved lower bounds and we determine the exact
  value $n(r,z,4)=2r+4z$ for $r\geq 2z-2$.  We also present
  computational results, obtaining mixed cages and computational lower
  bounds by means of exhaustive backtracking searches with symmetry
  reductions. Moreover, we give an upper bound for $n(1,z,g)$ that
  matches all our computed values.
\end{abstract}

\section{Introduction}\label{sec:intro}

The cage problem has been extensively studied \cite{EJ13} since 1947
and it consists of finding an $r$-regular graph of girth $g$ with the
minimum number of vertices, which is called an $(r,g)$-cage. Cages
were introduced by Tutte in \cite{Tut47} and Sachs established their
existence for all $r\geq 2$ and $g\geq 3$ \cite{Sac63}.

The order of an $(r,g)$-cage, $n(r,g)$, is bounded below by the
\emph{Moore bound}, $\MB(r,g)$ (as attributed by Hoffman and Singleton
in \cite{HS60}, although the version for even girth seems to appear
for the first time in Erd\H{o}s and Sachs \cite{ES63}, who credit a
method of Kàrteszi \cite{Kar60}). This bound is due to the fact that every
$(r,g)$-graph must contain a \emph{Moore tree}, which is an
$r$-regular (save for leaves) tree of diameter $g-1$ and maximum
order. Numerically, the Moore bound is as follows:

$$\MB(r,g) = \left\{\begin{array}{rcll}
1+r\cdot\sum_{k=0}^{(g-3)/2}(r-1)^{k} & = &\frac{r(r-1)^{(g-1)/2}-2}{r-2} & \textup{when $g$ is odd.}\\
&&&\\
2\cdot\sum_{k=0}^{(g-2)/2}(r-1)^{k} & = &\frac{2(r-1)^{g/2}-2}{r-2} & \textup{when $g$ is even.}
\end{array}\right.
$$

\noindent (Both closed forms above assume $r>2$; for $r=2$ the
corresponding sums must be used, giving $\MB(2,g)=g$.)

There are several variants of the cage problem in the literature,
including almost regular cages~\cite{AV07}, biregular
cages~\cite{AABLL13,EJ16}, biregular bipartite
cages~\cite{AJRS22,FRJ19}, vertex-transitive cages~\cite{JS11}, Cayley
cages~\cite{EJS13}, weighted cages~\cite{ADMP26}, directed
cages~\cite{BCW70} and mixed
cages~\cite{AHM19,ADG22,Exo23,JJ25,AM26,AM26a, JJKP25}. Here, we are
interested in this last variant. We find it convenient to collect the
relevant results in Tables~\ref{tab:ref1} and~\ref{tab:ref2}, so we
can refer to individual results by the label in the Id column. For
easier comparison, several of our results in this paper appear in
Table~\ref{tab:ref2} as [{\sc dp1}]--[{\sc dp13}].  In these tables,
some of the original results were fused and a few typos corrected.

Mixed graphs admit both edges and arcs. An \emph{$(r,z,g)$-mixed
  graph} is a mixed graph that is $r$-regular in edges, $z$-regular in
arcs and has girth $g$: every vertex is incident with $r$ edges, with
$z$ outgoing arcs and with $z$ incoming arcs; the girth is the length
of the shortest cycle. An \emph{$(r,z,g)$-mixed cage} is an
$(r,z,g)$-mixed graph with minimum number of vertices, and its order
is denoted by $n(r,z,g)$. This variant was introduced by
Araujo-Pardo, Hernández-Cruz and Montellano-Ballesteros in
\cite{AHM19} where, among other results,
the authors proved the existence of mixed cages and provided a lower
bound for the order of $(r,1,g)$-mixed cages. This bound is known as
the \emph{AHM bound}, $\AHM(r,1,g)$.

In this work, we extend the AHM bound for $z\geq 1$, obtaining the
\emph{extended bound} $\EB(r,z,g)$, and hence every $(r,z,g)$-mixed
graph that attains this bound is a mixed cage. We apply this bound to
the study of mixed cages of girth $3$ and $4$: we show that
$n(r,z,3)=r+2z+1$ for all $r,z\geq 1$, and for $g=4$ we establish that
$n(r,z,4)=2r+4z$ when $r\geq 2z-2$ and that $n(r,z,4)\geq 3r+2z+2$
when $r\leq 2z-2$ and $(r,z)\neq (1,2)$.  The previous equality for
$r\geq 2z-2$ uses an upper bound by Jajcayov\'a and Jajcay in
\cite{JJ25} (see [{\sc jj1}] in Table~\ref{tab:ref2}) and generalizes
a result from \cite{AHM19} (see [{\sc ahm2}] in Table~\ref{tab:ref1})
where the equality was only claimed for $z=1$. Moreover, we also
provide an upper bound for the order of $(1,z,g)$-mixed cages ([{\sc
  dp2}], [{\sc dp3}]), which generalizes previous results by
Araujo-Pardo, Hernández-Cruz and Montellano-Ballesteros \cite{AHM19}
([{\sc ahm6}], [{\sc ahm7}]) and by Exoo \cite{Exo23} ([{\sc e1}],
[{\sc e2}]), and which improves the bounds provided by Jajcayov\'a and
Jajcay \cite{JJ25} ([{\sc jj2}], [{\sc jj3}]).  We complement the
theoretical results with computational data obtained by adapting the
code presented in \cite{DP26,CAGES10} to mixed cages. Our new code
\cite{MTCAGES11} constructs mixed cages using exhaustive searches with
symmetry reductions. This gives rise to many new mixed cages and lower
bounds presented in Tables~\ref{tab:g=3} to~\ref{tab:g=10}. These
results are too many to be included in the format used in
Tables~\ref{tab:ref1} and~\ref{tab:ref2}, but a few of them are indeed
included there for easy comparison with some previous results.

The paper is organized as follows. In Section~\ref{sec:prelim}, we
introduce the necessary terminology and background
results. Section~\ref{sec:extend} establishes the extended bound,
$\EB(r,z,g)$. Our upper bound for $(1,z,g)$-mixed cages is presented
in Section~\ref{sec:upper}. Section~\ref{sec:g3} establishes that
$n(r,z,3)=r+2z+1$. The case $g=4$ is studied in
Section~\ref{sec:g4}. Finally, Section~\ref{sec:exp} presents our
computational results.

\begin{table}[!ht]
\noindent\resizebox{\textwidth}{!}{
\begin{tabular}{|l|l|l|l|l|}
\hline
\rowcolor{azulito}Id&Result & Hypotheses & Ref. & Year\\
\hline
\hline
{\sc bcw1}& $n\leq z(g-1)+1$ & $r=0$, $z\geq 1$ & \cite{BCW70} & 1970 \\
\hline
{\sc bcw2}& $n=2z+1$ &$r=0$, $g=3$  & \cite{BCW70} & 1970\\
\hline
{\sc bcw3}& $n\geq (5z+4)/2$ &$r=0$, $z\geq 2$, $g=4$  & \cite{BCW70} & 1970\\
\hline
{\sc bcw4}& $n=3z+1$ &$r=0$, $z=1,2,3,4$, $g=4$  & \cite{BCW70} & 1970\\
\hline
{\sc bcw5}& $n=9$ &$r=0$, $z=2$, $g=5$  & \cite{BCW70} & 1970\\
\hline
{\sc ahm1}& $n\geq \AHM$ & $z=1$  & \cite{AHM19} & 2019\\
\hline
{\sc ahm2}& $n=2r+4$ & $r\geq 1$, $z=1$, $g=4$  & \cite{AHM19} & 2019\\
\hline
{\sc ahm3}& $20\leq n\leq 28$ & $r=3$, $z=1$, $g=5$  & \cite{AHM19} & 2019\\
\hline
{\sc ahm4}& $n=(g^{2}+1)/2$ & $r=2$, $z=1$, $g=2k+1$  & \cite{AHM19} & 2019\\
\hline
{\sc ahm5}& $n = g^{2}/2$ & $r=2$, $z=1$, $g=2k$  & \cite{AHM19} & 2019\\
\hline
{\sc ahm6}& \begin{tabular}{@{}l@{}l@{}}
$n$ &$\leq 3(z+1)$ \\
&$= (z+1)(g-1)$ 
\end{tabular} & $r=1$, $z=2k+1$, $g=4$ & \cite{AHM19} & 2019\\
\hline
{\sc ahm7}& \begin{tabular}{@{}l@{}l@{}} 
$n$ &$\leq 3z+2$ \\
&$= z(g-1)+2$
\end{tabular}& $r=1$, $z=2k \geq 2$, $g=4$ & \cite{AHM19} & 2019\\
\hline
{\sc adg1}& $n\geq \MB+2z$ & None  & \cite{ADG22} & 2022\\
\hline
{\sc adg2}& $n\leq 2pr$ & \begin{tabular}{@{}l@{}}
$1\leq r\leq p$, $z\neq 2$, $g=5$, \\
$4z+1\leq p \leq 5z$ 
\end{tabular}  & \cite{ADG22} & 2022\\
\hline
{\sc adg3}& $n\leq g\cdot n(r,0,g)$ & $r\geq 2$, $1\leq z \leq n(r,0,g)$ & \cite{ADG22} & 2022\\
\hline
{\sc adg4}& $n=50$ & $r=3$, $z=10$, $g=5$  & \cite{ADG22} & 2022\\
\hline
{\sc e1}& $n=2g-2$ & $r=1$, $z=1$ & \cite{Exo23} & 2023\\
\hline
{\sc e2}& $n=2g$ & $r=1$, $z=2$ & \cite{Exo23} & 2023\\
\hline
{\sc e3}& $n\leq g^{2}$ & $r=2$, $z\leq g$ & \cite{Exo23} & 2023\\
\hline
{\sc e4}& $n\leq g^{2}-g$ & $r=2$, $z\leq g/2$, $g=2k$& \cite{Exo23} & 2023\\
\hline
{\sc e5}& $n\leq \left\lceil\frac{g}{2}\right\rceil\left\lfloor\frac{3g}{2}\right\rfloor
\sim \frac{3g^{2}}{4}$ & $r=2$, $z=2$ & \cite{Exo23} & 2023\\
\hline
{\sc e6}& $n=19$ & $r=2$, $z=2$, $g=5$  & \cite{Exo23} & 2023\\
\hline
{\sc e7}& $n=24$ & $r=3$, $z=1$, $g=5$  & \cite{Exo23} & 2023\\
\hline
{\sc e8}& $n=27$ & $r=2$, $z=2$, $g=6$  & \cite{Exo23} & 2023\\
\hline
{\sc e9}& $n=30$ & $r=3$, $z=1$, $g=6$  & \cite{Exo23} & 2023\\
\hline
{\sc e10}& $29\leq n\leq 34$ & $r=4$, $z=1$, $g=5$  & \cite{Exo23} & 2023\\
\hline
{\sc e11}& $40\leq n\leq 50$ & $r=5$, $z=1$, $g=5$  & \cite{Exo23} & 2023\\
\hline
{\sc e12}& $46\leq n\leq 48$ & $r=4$, $z=1$, $g=6$  & \cite{Exo23} & 2023\\
\hline
{\sc e13}& $66\leq n\leq 72$ & $r=5$, $z=1$, $g=6$  & \cite{Exo23} & 2023\\
\hline
{\sc e14}& $52\leq n\leq 60$ & $r=3$, $z=1$, $g=7$  & \cite{Exo23} & 2023\\
\hline
{\sc e15}& $74\leq n\leq 76$ & $r=3$, $z=1$, $g=8$  & \cite{Exo23} & 2023\\
\hline
\end{tabular}}
\caption{Results in the literature (first part). Here $r\geq 0$,
  $z\geq 1$, $g\geq 3$, $n=n(r,z,g)$, $k\geq 0$, $p$ is a prime
  number, $\MB=\MB(r,g)$ is the Moore bound and $\AHM=\AHM(r,1,g)$ is
  the AHM bound. Column ``Id'' is used for referencing the
  result.}\label{tab:ref1}
\end{table}

\begin{table}[!ht]
\noindent\resizebox{\textwidth}{!}{
\begin{tabular}{|l|l|l|l|l|}
\hline
\rowcolor{azulito}Id&Result & Hypotheses & Ref. & Year\\
\hline
\hline
{\sc jj1}& $n\leq 2r+4z$ & $r\geq 2$, $z\geq 1$, $g=4$ & \cite{JJ25} & 2025\\
\hline
{\sc jj2}& $n\leq zg$ &$r=1$, $z=2k \geq 2$ & \cite{JJ25} & 2025\\
\hline
{\sc jj3}& $n\leq (z+1)g$ &$r=1$, $z=2k+1 \geq 3$ & \cite{JJ25} & 2025\\
\hline
{\sc jj4}& $n \leq 2(q^{2}+q+1)$ &
\begin{tabular}{@{}l@{}}
$r=q+1-2z$, $z\geq 1$, $g=6$, \\$q\geq 2z+4$
\end{tabular} & \cite{JJ25} & 2025\\
\hline
{\sc jj5}& $n \leq 2(q^{3}+q^{2}+q+1)$ &
\begin{tabular}{@{}l@{}}
$r=q+1-2z$, $z\geq 1$, $g=8$, \\$q\geq 2z+6$
\end{tabular} & \cite{JJ25} & 2025\\
\hline
{\sc jj6}& $n \leq 2(q^{3}+1)(q^{2}+q+1)$ &
\begin{tabular}{@{}l@{}}
$r=q+1-2z$, $z\geq 2$, $g=12$, \\
$q\geq 2z+10$
\end{tabular} & \cite{JJ25} & 2025\\
\hline
{\sc jjkp1}& $n \leq 2q^{2}-2$ &
$r=q>7$, $z\leq \left\lfloor \frac{q-2}{4} \right\rfloor$, $g=5$ & \cite{JJKP25} & 2025\\
\hline
{\sc jjkp2}& $n \leq 2q^{2}$ &
\begin{tabular}{@{}l@{}}
$r=q=p^{\alpha}$, $z=kp^{\alpha-1}$, $g=5$, \\
 $p\geq 5$, $1\leq k \leq \left\lfloor \frac{p}{4} \right\rfloor$
\end{tabular} & \cite{JJKP25} & 2025\\
\hline
{\sc am1}& $n \leq 2q^{2}-2$ &
\begin{tabular}{@{}l@{}}
$r=q=4z+S$, $z\geq 1$, $g=5$,\\
$S\in\{2,3,\ldots, 6\}$
\end{tabular} & \cite{AM26} & 2026\\
\hline
{\sc am2}& $n \leq 4q^{2}-4$ &
$r=q$, $z=\left\lfloor \frac{q}{4} \right\rfloor$, $g=6$ & \cite{AM26a} & 2026\\
\hline
{\sc dp1}& $n\geq \EB = \AHM+2z-2$ & None & T\ref{thm:EB} & 2026\\
\hline
{\sc dp2}& $n\leq z(g-1)+2$ & $r=1$, $z=2k\geq 2$ & T\ref{thm:MCirc} & 2026\\
\hline
{\sc dp3}& $n\leq (z+1)(g-1)$ & $r=1$, $z=2k+1$ & T\ref{thm:MCirc} & 2026\\
\hline
{\sc dp4}& $n=r+2z+1$ & $g=3$ & T\ref{thm:g3} & 2026\\
\hline
{\sc dp5}& $n\geq 2r+2z+2$ & $g=4$ & T\ref{thm:EB} & 2026\\
\hline
{\sc dp6}& $n\geq 3r+2z+2$ & 
\begin{tabular}{@{}l@{}}
$r\leq 2z-2$, $g=4$, \\
$(r,z)\neq (1,2)$
\end{tabular} & T\ref{thm:2z-2} & 2026\\
\hline
{\sc dp7}& $n= 2r+4z$ & $r\geq 2z-2$, $g=4$ & T\ref{thm:2z-2} & 2026\\
\hline
{\sc dp8}& $n\geq 31$ & $r=4$, $z=1$, $g=5$  & Tb\ref{tab:g=5} & 2026\\
\hline
{\sc dp9}& $n\geq 42$ & $r=5$, $z=1$, $g=5$  & Tb\ref{tab:g=5} & 2026\\
\hline
{\sc dp10}& $n= 48$ & $r=4$, $z=1$, $g=6$  & Tb\ref{tab:g=6} & 2026\\
\hline
{\sc dp11}& $n\geq 68$ & $r=5$, $z=1$, $g=6$  & Tb\ref{tab:g=6} & 2026\\
\hline
{\sc dp12}& $n\geq 56$ & $r=3$, $z=1$, $g=7$  & Tb\ref{tab:g=7} & 2026\\
\hline
{\sc dp13}& $n= 76$ & $r=3$, $z=1$, $g=8$  & Tb\ref{tab:g=8} & 2026\\
\hline
\end{tabular}}
\caption{Results in the literature (continuation). Same conventions of
  the previous table and, besides, $\alpha\geq 1$, $q$ is a prime
  power number and $\EB=\EB(r,z,g)$ is the extended bound. In the
  column ``Ref.'', T$x$ and Tb$x$ refer, respectively, to Theorem $x$
  and Table $x$ of this paper.}\label{tab:ref2}
\end{table}

\section{Terminology and preliminaries}\label{sec:prelim}

We consider mixed graphs, which may contain edges and arcs but no
loops, and given two different vertices $x,y$ at most one edge or arc
connects them (so, no parallel edges, etc.). If $X$ and $Y$ are sets
of vertices of a mixed graph, we write $X\ed Y$ ($X\ra Y$) to indicate
that for every vertex in $X$ and every vertex in $Y$ there is an edge
(arc) connecting them. We may also write $Y\la X$ instead of $X\ra
Y$. When using this notation, we write simply $x$ instead of the
singleton $\{x\}$, hence, we may write $x\ed y$, $y\la x$, $x \ed Y$,
$X \ra y$, and so on. Note that ``$x\ed y$'' is different from the
numerical difference ``$x-y$''.
  
Given a vertex $x$, we define its \emph{neighborhood} as
$N(x) = \{y : x\ed y\}$, its \emph{in-neighborhood} as
$N^{-}(x) = \{y : x\la y\}$, and its \mbox{\emph{out-neighborhood}} as
$N^{+}(x) = \{y : x\ra y\}$. Note that, for our mixed graphs, the sets
$N(x)$, $N^{-}(x)$ and $N^{+}(x)$ are pairwise disjoint, we call this
property the \emph{disjoint condition}. The cardinalities of the sets
$N(x)$, $N^{-}(x)$ and $N^{+}(x)$ are the \emph{degree},
\emph{in-degree} and \emph{out-degree} of the vertex $x$,
respectively. A mixed graph is \emph{$(r,z)$-regular} if every vertex
has degree $r$, out-degree $z$ and in-degree $z$; if we drop this last
in-degree condition, the mixed graph is
\emph{$(r,z)$-semiregular}. Some authors prefer the terms
\emph{totally regular} and \emph{regular}, respectively. We may
occasionally use the term ``totally regular'' for emphasis and to
avoid ambiguities.

A \emph{cycle} of length $t$ in a mixed graph is a sequence of
vertices $(x_{0}, x_{1}, \ldots, x_{t})$ where $x_{0} = x_{t}$, the
vertices $x_{0}, x_{1}, \ldots, x_{t-1}$ are pairwise different, and
such that for all $i\in \{0,1,\ldots, t-1\}$ we have either
$x_{i}\ed x_{i+1}$ or $x_{i}\ra x_{i+1}$.  Such a cycle is said to be
a \emph{$t$-cycle} (necessarily $t\geq 3$). The \emph{girth} of a mixed
graph $G$, denoted $g(G)$, is the length of the shortest cycle of
$G$. We shall need to
distinguish $3$-cycles as in Figure~\ref{fig:3cycles} from
\emph{non-cyclic triangles} as in Figure~\ref{fig:untriangles}: the
former have girth 3 and the latter have infinite girth.

\begin{figure}[!ht]
  \begin{tikzpicture}[scale=.75]
    \node[stt1] (1) at (1,1) {};
    \node[stt1] (2) at (3,1) {};
    \node[stt1] (3) at (2,2) {};
    \node[stt1] (4) at (5,1) {};
    \node[stt1] (5) at (7,1) {};
    \node[stt1] (6) at (6,2) {};
    \node[stt1] (7) at (9,1) {};
    \node[stt1] (8) at (11,1) {};
    \node[stt1] (9) at (10,2) {};
    \node[stt1] (10) at (13,1) {};
    \node[stt1] (11) at (15,1) {};
    \node[stt1] (12) at (14,2) {};
    \draw [edg] (1)--(2); \draw [edg] (1)--(3);\draw [edg] (2)--(3);
    \draw [edg] (4)--(5); \draw [arr] (4)--(6);\draw [edg] (6)--(5);
    \draw [edg] (7)--(8); \draw [arr] (7)--(9);\draw [arr] (9)--(8);    
    \draw [arr] (11)--(10); \draw [arr] (10)--(12);\draw [arr] (12)--(11);    
  \end{tikzpicture}
  \caption{All 3-cycles.}
  \label{fig:3cycles}
\end{figure}

\begin{figure}[!ht]
  \begin{tikzpicture}[scale=.75]
    \node[stt1] (1) at (1,1) {};
    \node[stt1] (2) at (3,1) {};
    \node[stt1] (3) at (2,2) {};
    \node[stt1] (4) at (5,1) {};
    \node[stt1] (5) at (7,1) {};
    \node[stt1] (6) at (6,2) {};
    \node[stt1] (7) at (9,1) {};
    \node[stt1] (8) at (11,1) {};
    \node[stt1] (9) at (10,2) {};
    \draw [arr] (1)--(2); \draw [arr] (1)--(3);\draw [arr] (3)--(2);
    \draw [arr] (4)--(5); \draw [edg] (4)--(6);\draw [arr] (6)--(5);
    \draw [arr] (7)--(8); \draw [arr] (7)--(9);\draw [edg] (9)--(8);    
  \end{tikzpicture}
  \caption{All non-cyclic triangles.}
  \label{fig:untriangles}
\end{figure}

An \emph{$(r,z,g)$-mixed graph} is an $(r,z)$-regular mixed graph of
girth $g$. An \emph{$(r,z,g)$-mixed cage} is an $(r,z,g)$-mixed graph
of minimum order. The order of an $(r,z,g)$-mixed cage will be denoted
by $n(r,z,g)$. When $z=0$, mixed cages are simply cages, and when
$r=0$, they are directed cages. Therefore, in this paper we assume
$z\geq 1$ and $r\geq 1$ unless otherwise stated. 

In a similar way, semiregular mixed cages may be defined: an
\emph{$(r,z,g)$-semiregular mixed graph} is an $(r,z)$-semiregular
mixed graph of girth $g$ and an \emph{$(r,z,g)$-semiregular mixed
  cage} is an $(r,z,g)$-semiregular mixed graph of minimum order. The
order of an $(r,z,g)$-semiregular mixed cage will be denoted by
$n_{s}(r,z,g)$. Clearly $n_{s}(r,z,g) \leq n(r,z,g)$.  It is worth
mentioning that Exoo studied semiregular mixed cages in \cite{Exo23}
and hence all his lower bounds are also lower bounds for mixed
cages. On the other hand, Exoo's upper bounds, in principle, are not
necessarily upper bounds for mixed cages, however, all his
constructions are indeed (totally) regular and hence all kinds of
bounds from \cite{Exo23} are applicable to mixed cages. There are no
known examples where the strict inequality $n_{s}(r,z,g) < n(r,z,g)$
holds.

In \cite{AHM19}, Araujo-Pardo, Hernández-Cruz and
Montellano-Ballesteros gave a lower bound, \emph{$\AHM(r,1,g)$}, for
the order of the $(r,1,g)$-mixed cages. It is obtained by counting the
vertices of a mixed tree that must be contained as a subgraph in any
$(r,1,g)$-mixed graph. This mixed tree is constructed by rooting
copies of Moore trees at each internal vertex of a directed path of
length $g-1$ as in Figure~\ref{fig:AHM}: the Moore trees increase in
depth from the extremes to the center. It is easy to see that the
girth associated to these Moore trees (as in the Moore bound
$\MB(r,g)$) is given by $\gamma(i) =\min(2i+1,2g-2i-1)$, where $i$
runs from 1 to $g-2$. Hence the AHM bound may be expressed as in the
following theorem.

\begin{theorem}\textup{\cite[Thm.~2]{AHM19}}\label{thm:AHM1}
$n(r,1,g)\geq \AHM(r,1,g) = 2 + \sum_{i=1}^{g-2} \MB(r,\gamma(i))$
\end{theorem}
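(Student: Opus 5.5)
The plan is to exhibit, inside any $(r,1,g)$-mixed graph $G$, a mixed tree whose vertices are pairwise distinct and whose order is exactly $2+\sum_{i=1}^{g-2}\MB(r,\gamma(i))$.

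\textbf{Construction of the tree.} Since $z=1$, every vertex has exactly one out-arc. Start at any vertex $x_0$ and follow out-arcs to obtain a directed walk $x_0\ra x_1\ra\cdots\ra x_{g-1}$. Any repetition $x_i=x_j$ with $0<j-i\le g-1$ would give a cycle of length at most $g-1$, so these $g$ vertices are distinct. At each internal vertex $x_i$, $1\le i\le g-2$, hang an undirected Moore tree $T_i$ using only edges. It has depth $\lfloor(\gamma(i)-1)/2\rfloor$ when $\gamma(i)$ is odd; here $\gamma(i)$ is always odd, so the depth is $d_i=\min(i,g-1-i)$. Thus $T_i$ is rooted at $x_i$ with $r$ children at the root, $r-1$ children at each other internal vertex, and depth $d_i$. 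Its order is exactly $\MB(r,2d_i+1)=\MB(r,\gamma(i))$. The endpoints $x_0$ and $x_{g-1}$ contribute the extra $2$.

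\textbf{Distinctness.} Each $T_i$ taken alone has distinct vertices, since a repetition inside $T_i$ would create an undirected cycle of length at most $2d_i\le 2i<g$ (using $2d_i+1\le \gamma(i)\le g$ only as far as needed). For $u\in T_i$ and $v\in T_j$ with $i<j$, suppose $u=v$. Then there is a cycle that goes from $x_i$ along the path to $x_j$ ($j-i$ arcs), down $T_j$ to $v$ (at most $d_j\le g-1-j$ steps), and back up $T_i$ from $u$ to $x_i$ along edges, which can be traversed in either direction (at most $d_i\le i$ steps). Its length is at most
\[
(j-i)+(g-1-j)+i=g-1<g,
\]
a contradiction. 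The bound $d_j\le g-1-j$ was chosen precisely so that this works. The same argument, with $T_0=\{x_0\}$ and $T_{g-1}=\{x_{g-1}\}$ of depth $0$, shows that $x_0$ and $x_{g-1}$ are distinct from all the tree vertices.

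\textbf{Main obstacle.} The step needing most care is checking that the closed walk produced in the distinctness argument really contains a genuine cycle. The walk goes forward along arcs and uses edges freely. Its length is at least $3$, because the arcs $x_i\ra x_{i+1}$ prevent degenerate backtracking. It is a closed walk, so it contains a cycle of no greater length. That cycle is a mixed cycle, since every arc in it is traversed forward. Hence it has length less than $g$, contradicting the girth. Summing $|T_i|$ over $1\le i\le g-2$ and adding $2$ then gives the stated bound.
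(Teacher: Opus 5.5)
Your proposal is correct and takes the same approach as the paper and \cite{AHM19}. You build the directed path $x_0\rightarrow x_1\rightarrow\cdots\rightarrow x_{g-1}$, hang a Moore tree of depth $\min(i,g-1-i)$ on each internal vertex $x_i$, and get distinctness because any coincidence closes a walk of length at most $(j-i)+(g-1-j)+i=g-1$, and that walk contains a short cycle. One small slip: the chain $2d_i\le 2i<g$ is false once $i\ge g/2$, but the bound you actually need, $2d_i\le i+(g-1-i)=g-1$, does hold because $d_i=\min(i,g-1-i)$.
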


\begin{figure}[!ht]
\centering
\begin{tikzpicture}[scale = 0.7]

\begin{scope}[xshift=-0.4cm, yshift=3cm, scale=1.41]

\node at (3,2.5) {$a$};
\node at (9,2.5) {$b$};

\node[stt] (ini) at (1,2) {};
\node[stt] (fin) at (11,2) {};

\node[stt] (3) at (3,2) {};
\node[stt] (4) at (5,2) {};
\node[stt] (5) at (7,2) {};
\node[stt] (6) at (9,2) {};
\node[stt] (9) at (2.5,1.5) {};
\node[stt] (10) at (3,1.5) {};
\node[stt] (11) at (3.5,1.5) {};
\node[stt] (12) at (4.5,1.5) {};
\node[stt] (13) at (5,1.5) {};
\node[stt] (14) at (5.5,1.5) {};
\node[stt] (15) at (4.25,1) {};
\node[stt] (16) at (4.55,1) {};
\node[stt] (17) at (4.85,1) {};
\node[stt] (18) at (5.15,1) {};
\node[stt] (19) at (5.45,1) {};
\node[stt] (20) at (5.75,1) {};
\node[stt] (21) at (6.5,1.5) {};
\node[stt] (22) at (7,1.5) {};
\node[stt] (23) at (7.5,1.5) {}; 
\node[stt] (24) at (6.35,1) {};
\node[stt] (25) at (6.65,1) {};
\node[stt] (26) at (6.85,1) {};
\node[stt] (27) at (7.15,1) {};
\node[stt] (28) at (7.35,1) {};
\node[stt] (29) at (7.65,1) {};
\node[stt] (30) at (8.5,1.5) {};
\node[stt] (31) at (9,1.5) {};
\node[stt] (32) at (9.5,1.5) {};

\draw [arr] (ini) -- (3);
\draw [arr] (3) -- (4);
\draw [arr] (4) -- (5);
\draw [arr] (5) -- (6);
\draw [arr] (6) -- (fin);

\draw [edg] (9)--(3)--(10);
\draw [edg] (3)-- (11);

\draw [edg] (15)--(12)--(4)--(13)--(17);
\draw [edg] (12)-- (16);
\draw [edg] (13)-- (18);
\draw [edg] (4)--(14)--(19);
\draw [edg] (14)-- (20);

\draw [edg] (24)--(21)--(5)--(22)--(26);
\draw [edg] (21)-- (25);
\draw [edg] (22)-- (27);
\draw [edg] (5)--(23)--(29);
\draw [edg] (23)-- (28);

\draw [edg] (30)--(6)--(31);
\draw [edg] (6)-- (32);

\end{scope}

\begin{scope}[xshift=0cm, yshift=0cm, scale=1]

\node at (3,2.5) {$a$};
\node at (13,2.5) {$b$};

\node[stt] (ini) at (1,2) {};
\node[stt] (fin) at (15,2) {};


\node[stt] (3) at (3,2) {};
\node[stt] (4) at (5,2) {};
\node[stt] (5) at (8,2) {};
\node[stt] (6) at (11,2) {};
\node[stt] (7) at (13,2) {};


\node[stt] (10) at (2.5,1.2) {};
\node[stt] (11) at (3,1.2) {};
\node[stt] (12) at (3.5,1.2) {};

\node[stt] (13) at (4.4,1.2) {};
\node[stt] (14) at (5,1.2) {};
\node[stt] (15) at (5.6,1.2) {};
\node[stt] (16) at (4.25,0.4) {};
\node[stt] (17) at (4.55,0.4) {};
\node[stt] (18) at (4.85,0.4) {};
\node[stt] (19) at (5.15,0.4) {};
\node[stt] (20) at (5.45,0.4) {};
\node[stt] (21) at (5.75,0.4) {};

\node[stt] (22) at (6.7,1.2) {};
\node[stt] (23) at (8,1.2) {};
\node[stt] (24) at (9.3,1.2) {};
\node[stt] (25) at (6.40,0.4) {};
\node[stt] (26) at (7,0.4) {};
\node[stt] (27) at (7.70,0.4) {};
\node[stt] (28) at (8.30,0.4) {};
\node[stt] (29) at (9,0.4) {};
\node[stt] (30) at (9.6,0.4) {};
\node[stt] (31) at (6.25,-0.4) {};
\node[stt] (32) at (6.55,-0.4) {};
\node[stt] (33) at (6.85,-0.4) {};
\node[stt] (34) at (7.15,-0.4) {};
\node[stt] (35) at (7.55,-0.4) {};
\node[stt] (36) at (7.85,-0.4) {};
\node[stt] (37) at (8.15,-0.4) {};
\node[stt] (38) at (8.45,-0.4) {};
\node[stt] (39) at (8.85,-0.4) {};
\node[stt] (40) at (9.15,-0.4) {};
\node[stt] (41) at (9.45,-0.4) {};
\node[stt] (42) at (9.75,-0.4) {};

\node[stt] (43) at (10.4,1.2) {};
\node[stt] (44) at (11,1.2) {};
\node[stt] (45) at (11.6,1.2) {};
\node[stt] (46) at (10.25,0.4) {};
\node[stt] (47) at (10.55,0.4) {};
\node[stt] (48) at (10.85,0.4) {};
\node[stt] (49) at (11.15,0.4) {};
\node[stt] (50) at (11.45,0.4) {};
\node[stt] (51) at (11.75,0.4) {};

\node[stt] (52) at (12.5,1.2) {};
\node[stt] (53) at (13,1.2) {};
\node[stt] (54) at (13.5,1.2) {};

\draw [arr] (ini) -- (3);
\draw [arr] (3) -- (4);
\draw [arr] (4) -- (5);
\draw [arr] (5) -- (6);
\draw [arr] (6) -- (7);
\draw [arr] (7) -- (fin);

\draw [edg] (10)--(3)--(11);
\draw [edg] (3)-- (12);

\draw [edg] (16)--(13)--(4)--(14)--(18);
\draw [edg] (13)-- (17);
\draw [edg] (14)-- (19);
\draw [edg] (4)--(15)--(20);
\draw [edg] (15)-- (21);

\draw [edg] (31)--(25)--(22)--(5)--(23)--(28)--(38);
\draw [edg] (22)--(26)--(34);
\draw [edg] (25)-- (32);
\draw [edg] (26)-- (33);
\draw [edg] (23)--(27)--(35);
\draw [edg] (27)-- (36);
\draw [edg] (28)-- (37);
\draw [edg] (5)--(24)--(30)--(42);
\draw [edg] (24)--(29)--(39);
\draw [edg] (29)-- (40);
\draw [edg] (30)-- (41);

\draw [edg] (46)--(43)--(6)--(44)--(49);
\draw [edg] (43)-- (47);
\draw [edg] (44)-- (48);
\draw [edg] (6)--(45)--(51);
\draw [edg] (45)-- (50);

\draw [edg] (52)--(7)--(53);
\draw [edg] (7)-- (54);

\end{scope}

\end{tikzpicture}

\caption{AHM trees for $(r,1,g)=(3,1,6)$ and $(r,1,g)=(3,1,7)$.}\label{fig:AHM}
\end{figure} 

Jajcayov\'a and Jajcay \cite{JJ25} gave a construction of an
$(r,z,4)$-mixed graph for $r\geq 2 $ and $z\geq 1$, which provides the
upper bound in the following theorem. Here we provide a slightly more
explicit construction that also works for $r=1$.

\begin{theorem}\textup{\cite{JJ25}}\label{JJ_bound}
For $r,z\geq 1$, we have $n(r,z,4) \leq 2r+4z.$
\end{theorem}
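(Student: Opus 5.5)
We will construct explicitly an $(r,z)$-regular mixed graph on $2r+4z$ vertices whose underlying graph is bipartite. This automatically rules out $3$-cycles, so it will only remain to exhibit a $4$-cycle. Set $m=r+2z$ and take two disjoint copies of $\mathbb{Z}_m$, namely $A=\{a_i : i\in\mathbb{Z}_m\}$ and $B=\{b_i : i\in\mathbb{Z}_m\}$, for a total of $2m=2r+4z$ vertices. Every pair $a_i,b_k$ will be joined by exactly one edge or arc, so the underlying simple graph is $K_{m,m}$. The type of connection depends only on the difference $j=k-i \in \mathbb{Z}_m$:
\begin{itemize}
\item[(i)] $a_i\ed b_{i+j}$ for $j\in\{0,\ldots,r-1\}$;
\item[(ii)] $a_i\ra b_{i+j}$ for $j\in\{r,\ldots,r+z-1\}$;
\item[(iii)] $b_{i+j}\ra a_i$ for $j\in\{r+z,\ldots,r+2z-1\}$.
\end{itemize}
These three ranges partition $\mathbb{Z}_m$. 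Hence we get no loops and no parallel connections, and the disjoint condition holds.

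Next we check regularity by counting. Each $a_i$ has exactly $r$ neighbours, $z$ out-neighbours and $z$ in-neighbours, since each range of differences has the stated size. Dually, for a fixed $b_k$, the vertices $a_{k-j}$ with $j$ in each of the three ranges give $r$ edges, $z$ incoming arcs and $z$ outgoing arcs at $b_k$. Thus the mixed graph is totally $(r,z)$-regular. Because every edge and arc joins $A$ to $B$, every cycle alternates sides and therefore has even length. Consequently there are no $3$-cycles, and the girth is at least $4$.

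The only step needing a small idea is producing a $4$-cycle that respects the arc directions, since not every $4$-cycle of $K_{m,m}$ is a cycle of the mixed graph. We use the cycle
\[
a_0\ra b_r,\qquad b_r\ra a_{-z},\qquad a_{-z}\ed b_{-z},\qquad b_{-z}\ra a_0 .
\]
The first connection is an arc of type (ii) with $j=r$. The second is an arc of type (iii), because $r-(-z)=r+z$. The third is an edge of type (i) with $j=0$. The fourth is an arc of type (iii), because $0-(-z)\equiv -z\equiv r+z \pmod m$.

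The four vertices are distinct. Indeed, $a_0\neq a_{-z}$ since $1\le z<m$, and $b_r\neq b_{-z}=b_{r+z}$ since $z\not\equiv 0 \pmod m$. Hence the girth is exactly $4$, and this works for every $r,z\ge 1$, including $r=1$. It follows that $n(r,z,4)\le 2r+4z$.
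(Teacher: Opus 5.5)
Your proof is correct and uses the same construction as the paper: the bipartite mixed graph on two copies of $\mathbb{Z}_{r+2z}$, where the difference ranges $[0,r)$, $[r,r+z)$ and $[r+z,r+2z)$ give edges, out-arcs and in-arcs respectively, and bipartiteness excludes $3$-cycles. The only difference is the witnessing $4$-cycle. Your cycle $a_{0}\ra b_{r}\ra a_{-z}\ed b_{-z}\ra a_{0}$ works uniformly for all $r\geq 1$, whereas the paper's cycle $(x_{0},y_{r+z-1},x_{r+z-1},y_{r+z},x_{0})$ needs a small case split between $r=1$ and $r\geq 2$.
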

\begin{proof}
  Take $m=r+2z$ and note that $2r+4z = 2(r+2z) = 2m$. Let us define
  the following bipartite mixed graph $G$ on $2m$ vertices: take
  $V=\{x_{i}, y_{i} : i\in \mathbb{Z}_{m}\}$ and define the adjacencies
  as follows:

$$ \begin{array}{lll}
     N(x_{i}) &=& \{y_{i+\alpha}: 0\leq \alpha < r\},\\
     N^{+}(x_{i}) &=& \{y_{i+\alpha}: r\leq \alpha < r+z\},\\ 
     N^{-}(x_{i}) &=& \{y_{i+\alpha}: r+z\leq \alpha < r+2z\}.
\end{array}$$

It should be clear that
$(x_{0}, y_{r+z-1}, x_{r+z-1}, y_{r+z}, x_{0})$ is always a 4-cycle
(either $r=1$ and
$(x_{0}\ra y_{r+z-1}\ed x_{r+z-1}\ra y_{r+z}\ra x_{0})$, or $r\geq 2$
and $(x_{0}\ra y_{r+z-1}\ed x_{r+z-1}\ed y_{r+z}\ra x_{0})$). Hence
$G$ is an $(r,z,4)$-mixed graph.
\end{proof}

As usual, $\overrightarrow{C}_{n}(j_{1},j_{2},\ldots, j_{t})$ denotes
the circulant with vertices in $\mathbb{Z}_{n}$ where there is an arc
from vertex $x$ to vertex $y$ whenever
$y-x \in \{j_{1},j_{2},\ldots, j_{t}\}$. Since antiparallel arcs are
forbidden in our mixed graphs, we assume the convention that such
antiparallel arcs resulting from the previous definition are replaced
by edges. Hence, for instance, $\overrightarrow{C}_{4}(1,2)$ is a
directed 4-cycle with two edges as diagonals and
$\overrightarrow{C}_{6}(\pm 1,2)$ is a 6-cycle made of edges, with two
directed 3-cycles inside. Behzad, Chartrand and Wall in \cite{BCW70}
showed that the circulant
$\BCW(z,g) = \overrightarrow{C}_{n}(1,2,\ldots,z)$ with $n=z(g-1)+1$
is a $(0,z,g)$-mixed graph (digraph actually) and hence that
$n(0,z,g)\leq z(g-1)+1$ (see also [{\sc bcw1}]); they also conjectured
that this bound is sharp. For future reference, we transcribe the
conjecture here:

\begin{conjecture}\textup{\cite{BCW70}}\label{conj:BCW}
For $z\geq 1$ and $g\geq 2$, $n(0,z,g)= z(g-1)+1$.
\end{conjecture}

Given two mixed graphs $G$ and $H$, the \emph{composition}, $G[H]$ is
the mixed graph with vertex set $V(G)\times V(H)$ and with the
following adjacency rules:\bigskip

\begin{tabular}{lll}
$(g_{1},h_{1})\ra (g_{2},h_{2})$ & $\iff$ &
$g_{1}\ra g_{2}$ or ($g_{1}=g_{2}$ and $h_{1}\ra h_{2}$).\\
$(g_{1},h_{1})\ed (g_{2},h_{2})$ & $\iff$ & 
$g_{1}\ed g_{2}$ or ($g_{1}=g_{2}$ and $h_{1}\ed h_{2}$).
\end{tabular}

\section{Extended bound}\label{sec:extend}

A small variation of the AHM trees in Figure~\ref{fig:AHM} gives us a
new bound, valid for all $r,z,g$. Take
$\gamma(i) =\min(2i+1,2g-2i-1)$ as in the definition of the AHM bound
and define the \emph{extended bound} as
$\EB(r,z,g) = 2z + \sum_{i=1}^{g-2} \MB(r,\gamma(i)) =
\AHM(r,1,g)+2z-2$.
 
\begin{theorem}\label{thm:EB} 
  Let $z\geq 1$. An $(r,z,g)$-mixed graph must contain a subgraph as
  the one depicted in Figure~\ref{fig:EB} and hence
  $n(r,z,g)\geq \EB(r,z,g)$. In particular,
  $n(r,z,3)\geq \EB(r,z,3) = r+2z+1$ and
  $n(r,z,4)\geq \EB(r,z,4) = 2r+2z+2$. All these bounds are also valid
  for $(r,z,g)$-semiregular mixed graphs.
\end{theorem}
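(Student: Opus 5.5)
We mirror the AHM argument, replacing the two endpoints of the directed path by the $2z$ arc-neighbours of its extremes. Fix a vertex and follow $g-2$ consecutive arcs to obtain a directed path $a=v_1\ra v_2\ra\cdots\ra v_{g-2}=b$. Such a path exists because every vertex has out-degree $z\geq 1$. Its vertices are distinct: a repetition would close a directed cycle of length at most $g-2$, contradicting the girth. At each $v_i$ we root an edge-only Moore tree $T_i$ of girth parameter $\gamma(i)=\min(2i+1,2g-2i-1)$, that is, of depth about $\lfloor(\gamma(i)-1)/2\rfloor$ around $v_i$. We then add the $z$ in-neighbours of $a$ and the $z$ out-neighbours of $b$; these are the $2z$ extra vertices. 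The in-neighbours of $a$ play the role of the starting vertex of the AHM path, and the out-neighbours of $b$ the role of the final vertex. Counting gives $2z+\sum_{i=1}^{g-2}\MB(r,\gamma(i))=\EB(r,z,g)$, provided all these vertices are pairwise distinct.

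Most of the distinctness argument is inherited from \cite[Thm.~2]{AHM19}, since only edges and out-arcs are used to reach each vertex. Within a single $T_i$, two coincident vertices would give an edge cycle of length less than $\gamma(i)\le g$. Suppose instead a vertex $x$ lies in both $T_i$ and $T_j$ with $i<j$, at edge-distances $d_i$ and $d_j$. Then $v_i\ra\cdots\ra v_j$ followed by the edge path from $v_j$ to $x$ and back to $v_i$ is a mixed cycle of length $(j-i)+d_i+d_j$. The depth choices $d_i\le\min(i,g-1-i)-1$-type guarantee that this is less than $g$. This is exactly the inequality verified for $\gamma$ in AHM, so we cite that computation rather than redo it.

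The new part concerns the $2z$ extra vertices, and this is the step I expect to need the most care. Let $u$ be an in-neighbour of $a$. If $u$ lies in $T_i$ at edge-distance $d$, then $u\ra v_1\ra\cdots\ra v_i$ followed by the edge path back to $u$ is a cycle of length $i+d$. If $u$ coincides with an out-neighbour $w$ of $b$, then $a\ra\cdots\ra b\ra u\ra a$ is a directed cycle of length $g-1$. Symmetrically, if an out-neighbour $w$ of $b$ lies in $T_i$ at distance $d$, we get a cycle of length $(g-2-i)+1+d$. We must check that $i+d<g$ and $g-1-i+d<g$ with the Moore-tree depths. With depth $\le\min(i-1,g-2-i)$, the first is at most $i+g-2-i<g$ and the second is at most $g-1-i+i-1<g$, so it works; the degenerate case $d=0$, where $u=v_i$, is included. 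The $z$ in-neighbours of $a$ are distinct from one another, as are the out-neighbours of $b$, and by the disjoint condition they are not edge-neighbours of $a$ or $b$. A coincidence $u=b$ would give a directed $(g-2)$-cycle.

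For $g=3$ the path is just $a=b$, and the count is $1+r+2z$; here we need $N^-(a)\cap N^+(a)=\emptyset$, which follows because a common vertex would give a directed 2-cycle, i.e.\ antiparallel arcs, which are excluded. For $g=4$ the count is $2(1+r)+2z$. Finally, only out-arcs along the path, in-arcs at $a$ and out-arcs at $b$, and edges were used, never in-degree regularity elsewhere, so the argument applies verbatim to $(r,z,g)$-semiregular mixed graphs, in which every vertex still has in-degree at least $z$. If $a$ were short of in-neighbours, we would instead choose $a$ to be a vertex with at least $z$ in-neighbours, which exists by averaging out-degrees.
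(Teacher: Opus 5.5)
Your proposal takes the paper's route. You build the EB tree along a directed path $a=v_1\ra\cdots\ra v_{g-2}=b$ and get distinctness of its vertices from the girth, using the disjoint condition and the ban on antiparallel arcs for the degenerate short cases. In the semiregular case you pick $a$ with in-degree at least $z$ by averaging. You spell out the distinctness checks that the paper leaves as ``otherwise a cycle shorter than $g$ would be formed''. Two slips need fixing.

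\textbf{Wrong tree depth.} In your distinctness checks you bound the depth of the tree at $v_i$ by $\min(i-1,g-2-i)$. The Moore tree counted by $\MB(r,\gamma(i))$ actually has depth $\min(i,g-1-i)$. For example, it has depth $1$ at $a$ when $g=3$, which is where the $r$ in your count $1+r+2z$ comes from. As written, your checks only cover a smaller tree that does not give $\EB(r,z,g)$. With the correct depths the bounds become $(j-i)+d_i+d_j\le g-1$, $i+d\le g-1$ and $(g-1-i)+d\le g-1$. So the argument still works, just with no slack.

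\textbf{Semiregular in-degrees.} It is false that every vertex of a semiregular mixed graph has in-degree at least $z$. Only the average in-degree is $z$, since the in-degrees sum to $nz$. So choosing $a$ by averaging is the necessary step, exactly as in the paper, not a fallback.

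A minor point: the path on $g-2$ vertices uses $g-3$ arcs, not $g-2$.
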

\begin{proof} Since we are considering $(r,z,g)$-mixed graphs, it is clear
  that all the vertices depicted in Figure~\ref{fig:EB} must have at
  least as many neighbors as depicted. All of these vertices must be
  distinct, since otherwise a cycle shorter than $g$ would be formed. 

  In the semiregular case, only the in-neighborhood of vertex $a$ in
  Figure~\ref{fig:EB} is not guaranteed. However, it should be clear
  that the average in-degree in an $(r,z,g)$-semiregular mixed graph
  is $z$, and hence there is a vertex with at least $z$
  in-neighbors. Therefore, one can construct the required subgraph
  starting with such a vertex~$a$.
\end{proof}

\begin{theorem}\label{thm:EBreg}
  Every $(r,z,g)$-semiregular mixed graph of order $\EB(r,z,g)$ is
  totally regular, and hence it is an $(r,z,g)$-mixed cage.
\end{theorem}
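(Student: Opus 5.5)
Let $G$ be an $(r,z,g)$-semiregular mixed graph of order exactly $\EB(r,z,g)$. The plan is to show that every vertex of $G$ has in-degree at least $z$. Since the total in-degree equals the total out-degree, which is $z\cdot|V(G)|$, every in-degree must then be exactly $z$, so $G$ is totally regular. It is then an $(r,z,g)$-mixed graph of order $\EB(r,z,g)\leq n(r,z,g)$ (Theorem~\ref{thm:EB}), hence a mixed cage.

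To get the in-degree bound, I would argue by contradiction. Suppose some vertex $v$ has in-degree $<z$. Because the average in-degree is $z$, some vertex $a$ has in-degree $\geq z+1$, or else all in-degrees are $z$ and we are done. Following the proof of Theorem~\ref{thm:EB}, I build the subgraph of Figure~\ref{fig:EB} rooted at $a$. This subgraph uses $z$ in-neighbours of $a$, a directed path of length $g-1$ through $a$, with Moore trees hanging from its internal vertices, and the out-neighbours at the far end. The proof of Theorem~\ref{thm:EB} gives that all these vertices are distinct, and since their number is exactly $\EB(r,z,g)=|V(G)|$, they exhaust $V(G)$. Now take an extra in-neighbour $a'$ of $a$ outside the $z$ chosen ones. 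It must itself be one of the vertices of the figure, and I would check that in every position it produces a cycle of length $<g$, or else violates the disjoint condition or the degree counts of the vertices already fully saturated in the figure. This contradiction shows no vertex has in-degree $>z$, and so all in-degrees equal $z$.

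The key step, and the main obstacle, is this case analysis on where the extra in-neighbour $a'$ lies in the figure.
\begin{itemize}
\item If $a'$ is on the tree side near $a$, the arc $a'\ra a$ closes a short cycle, since the vertex lies within distance less than $g-1$ from $a$.
\item If $a'$ is among the leaves near the far end $b$, I would use the fact that there is a path from $a$ to such a leaf of length at most $g-1$.
\end{itemize}
Together with the arc $a'\ra a$, that path gives a cycle of length at most $g-1$. The distance bookkeeping for this second case is exactly the computation encoded in $\gamma(i)=\min(2i+1,2g-2i-1)$: a vertex in the Moore tree at position $i$ has depth at most $\lfloor(\gamma(i)-1)/2\rfloor$, so its distance from $a$ along the figure is at most $i+(g-1-i)-1<g$. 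I expect the delicate subcase to be the $z$ out-neighbours of $b$ (the terminal vertices). For these I would first try rerooting, taking the figure at a different root, to reach a contradiction.

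A cleaner alternative that I would try first is to work with out-degrees instead. Every vertex can serve as the end $b$ of a figure built backwards using out-arcs only, provided the construction never needs in-degrees except at one end. Doing this at a vertex $v$ of minimum in-degree would show $v$ needs $z$ in-neighbours to fill the $\EB$ vertices, because otherwise some vertex of the figure is left with no admissible placement. Either route reduces the theorem to the same distinctness and distance argument already used in Theorem~\ref{thm:EB}.
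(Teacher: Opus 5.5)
Your strategy matches the paper's: take a vertex $a$ of in-degree at least $z+1$, build the EB figure at $a$, and let an extra in-neighbour $a'$ contradict $|G|=\EB(r,z,g)$. As written, though, the argument is incomplete in two places.

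\begin{itemize}
\item The case you single out as the main obstacle, $a'$ being one of the $z$ out-neighbours of $b$, is left with only a vague ``rerooting'' plan.
\item Your bookkeeping for the other case is off by one. A path of length at most $g-1$ from $a$ to $a'$, together with the arc $a'\ra a$, only gives a closed walk of length $g$, which contradicts nothing. You need a bound of $g-2$, which is in fact what your formula $i+(g-1-i)-1$ evaluates to.
\end{itemize}

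The missing observation is that no case analysis is needed. Write $a=v_1\ra v_2\ra\cdots\ra v_{g-2}=b$ for the spine. Every vertex of the figure other than $a$ and its $z$ chosen in-neighbours is reachable from $a$ by a walk of length at most $g-2$ (arcs traversed forwards, edges either way):
\begin{itemize}
\item a vertex of the Moore tree at $v_i$ is reached via $i-1$ arcs and at most $\min(i,g-1-i)\le g-1-i$ edges;
\item an out-neighbour $w$ of $b$ is reached via the $g-3$ arcs of the spine followed by $b\ra w$.
\end{itemize}
So if $a'$ were such a vertex, the arc $a'\ra a$ would close a walk of length at most $g-1$ that uses an arc. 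Such a walk contains a cycle of length at most $g-1$, since a closed walk of length $2$ through an arc is excluded by the disjoint condition; for $g=3$ the disjoint condition alone gives the contradiction. In particular, your ``delicate'' case $a'=w$ is just the directed cycle $a\ra\cdots\ra b\ra w\ra a$ of length $g-1$.

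Equivalently, and this is how the paper argues in one line, $a'$ sits in exactly the same position as the $z$ in-neighbours already in the figure. The distinctness argument of Theorem~\ref{thm:EB} therefore applies verbatim to the figure with $z+1$ in-neighbours, which then has $\EB(r,z,g)+1$ distinct vertices.

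Finally, drop your ``cleaner alternative''. A vertex of deficient in-degree only yields a smaller figure, which is perfectly compatible with $|G|=\EB(r,z,g)$; the contradiction has to come from a vertex of excess in-degree.
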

\begin{proof}
  If an $(r,z,g)$-semiregular mixed graph, $G$, is not totally regular, since it has
  average in-degree $z$, there must be a vertex of in-degree greater
  than $z$. Now one can construct the graph in Figure~\ref{fig:EB}
  starting with that vertex as vertex $a$, granting at least one
  additional in-neighbor to $a$. Therefore, $|G|\geq \EB(r,z,g)+1$,
  which is a contradiction.
\end{proof}

\begin{figure}[!ht]
\centering
\begin{tikzpicture}[scale = 0.7]

\begin{scope}[xshift=-0.5cm, yshift=3cm, scale=1.37]

\node at (3,2.5) {$a$};
\node at (9,2.5) {$b$};

\node at (0.5,2.02) {$z$ $\left\{\vphantom{\rotatebox{90}{mmm}}\right.$};
\node at (11.7,2.02) {$\left.\vphantom{\rotatebox{90}{mmm}}\right\}$ $z$ };

\node[dot] at (1,1.8) {};
\node[dot] at (1,2) {};
\node[dot] at (1,2.2) {};

\node[dot] at (11,1.8) {};
\node[dot] at (11,2) {};
\node[dot] at (11,2.2) {};

\node[stt] (1) at (1,2.6) {};
\node[stt] (2) at (1,1.4) {};
\node[stt] (3) at (3,2) {};
\node[stt] (4) at (5,2) {};
\node[stt] (5) at (7,2) {};
\node[stt] (6) at (9,2) {};
\node[stt] (7) at (11,2.6) {};
\node[stt] (8) at (11,1.4) {};
\node[stt] (9) at (2.5,1.5) {};
\node[stt] (10) at (3,1.5) {};
\node[stt] (11) at (3.5,1.5) {};
\node[stt] (12) at (4.5,1.5) {};
\node[stt] (13) at (5,1.5) {};
\node[stt] (14) at (5.5,1.5) {};
\node[stt] (15) at (4.25,1) {};
\node[stt] (16) at (4.55,1) {};
\node[stt] (17) at (4.85,1) {};
\node[stt] (18) at (5.15,1) {};
\node[stt] (19) at (5.45,1) {};
\node[stt] (20) at (5.75,1) {};
\node[stt] (21) at (6.5,1.5) {};
\node[stt] (22) at (7,1.5) {};
\node[stt] (23) at (7.5,1.5) {}; 
\node[stt] (24) at (6.35,1) {};
\node[stt] (25) at (6.65,1) {};
\node[stt] (26) at (6.85,1) {};
\node[stt] (27) at (7.15,1) {};
\node[stt] (28) at (7.35,1) {};
\node[stt] (29) at (7.65,1) {};
\node[stt] (30) at (8.5,1.5) {};
\node[stt] (31) at (9,1.5) {};
\node[stt] (32) at (9.5,1.5) {};

\draw [arr] (1) -- (3);
\draw [arr] (2) -- (3);
\draw [arr] (3) -- (4);
\draw [arr] (4) -- (5);
\draw [arr] (5) -- (6);
\draw [arr] (6) -- (7);
\draw [arr] (6) -- (8);

\draw [edg] (9)--(3)--(10);
\draw [edg] (3)-- (11);

\draw [edg] (15)--(12)--(4)--(13)--(17);
\draw [edg] (12)-- (16);
\draw [edg] (13)-- (18);
\draw [edg] (4)--(14)--(19);
\draw [edg] (14)-- (20);

\draw [edg] (24)--(21)--(5)--(22)--(26);
\draw [edg] (21)-- (25);
\draw [edg] (22)-- (27);
\draw [edg] (5)--(23)--(29);
\draw [edg] (23)-- (28);

\draw [edg] (30)--(6)--(31);
\draw [edg] (6)-- (32);

\end{scope}

\begin{scope}[xshift=0cm, yshift=0cm, scale=1]

\node at (3,2.5) {$a$};
\node at (13,2.5) {$b$};


\node at (0.2,2) {$z$ $\Biggl\{ $};
\node at (15.7,2) {$\Biggr\}$ $z$};

\node[dot] at (1,1.8) {};
\node[dot] at (1,2) {};
\node[dot] at (1,2.2) {};

\node[dot] at (15,1.8) {};
\node[dot] at (15,2) {};
\node[dot] at (15,2.2) {};

\node[stt] (1) at (1,2.6) {};
\node[stt] (2) at (1,1.4) {};

\node[stt] (3) at (3,2) {};
\node[stt] (4) at (5,2) {};
\node[stt] (5) at (8,2) {};
\node[stt] (6) at (11,2) {};
\node[stt] (7) at (13,2) {};

\node[stt] (8) at (15,2.6) {};
\node[stt] (9) at (15,1.4) {};

\node[stt] (10) at (2.5,1.2) {};
\node[stt] (11) at (3,1.2) {};
\node[stt] (12) at (3.5,1.2) {};

\node[stt] (13) at (4.4,1.2) {};
\node[stt] (14) at (5,1.2) {};
\node[stt] (15) at (5.6,1.2) {};
\node[stt] (16) at (4.25,0.4) {};
\node[stt] (17) at (4.55,0.4) {};
\node[stt] (18) at (4.85,0.4) {};
\node[stt] (19) at (5.15,0.4) {};
\node[stt] (20) at (5.45,0.4) {};
\node[stt] (21) at (5.75,0.4) {};

\node[stt] (22) at (6.7,1.2) {};
\node[stt] (23) at (8,1.2) {};
\node[stt] (24) at (9.3,1.2) {};
\node[stt] (25) at (6.40,0.4) {};
\node[stt] (26) at (7,0.4) {};
\node[stt] (27) at (7.70,0.4) {};
\node[stt] (28) at (8.30,0.4) {};
\node[stt] (29) at (9,0.4) {};
\node[stt] (30) at (9.6,0.4) {};
\node[stt] (31) at (6.25,-0.4) {};
\node[stt] (32) at (6.55,-0.4) {};
\node[stt] (33) at (6.85,-0.4) {};
\node[stt] (34) at (7.15,-0.4) {};
\node[stt] (35) at (7.55,-0.4) {};
\node[stt] (36) at (7.85,-0.4) {};
\node[stt] (37) at (8.15,-0.4) {};
\node[stt] (38) at (8.45,-0.4) {};
\node[stt] (39) at (8.85,-0.4) {};
\node[stt] (40) at (9.15,-0.4) {};
\node[stt] (41) at (9.45,-0.4) {};
\node[stt] (42) at (9.75,-0.4) {};

\node[stt] (43) at (10.4,1.2) {};
\node[stt] (44) at (11,1.2) {};
\node[stt] (45) at (11.6,1.2) {};
\node[stt] (46) at (10.25,0.4) {};
\node[stt] (47) at (10.55,0.4) {};
\node[stt] (48) at (10.85,0.4) {};
\node[stt] (49) at (11.15,0.4) {};
\node[stt] (50) at (11.45,0.4) {};
\node[stt] (51) at (11.75,0.4) {};

\node[stt] (52) at (12.5,1.2) {};
\node[stt] (53) at (13,1.2) {};
\node[stt] (54) at (13.5,1.2) {};

\draw [arr] (1) -- (3);
\draw [arr] (2) -- (3);
\draw [arr] (3) -- (4);
\draw [arr] (4) -- (5);
\draw [arr] (5) -- (6);
\draw [arr] (6) -- (7);
\draw [arr] (7) -- (8);
\draw [arr] (7) -- (9);

\draw [edg] (10)--(3)--(11);
\draw [edg] (3)-- (12);

\draw [edg] (16)--(13)--(4)--(14)--(18);
\draw [edg] (13)-- (17);
\draw [edg] (14)-- (19);
\draw [edg] (4)--(15)--(20);
\draw [edg] (15)-- (21);

\draw [edg] (31)--(25)--(22)--(5)--(23)--(28)--(38);
\draw [edg] (22)--(26)--(34);
\draw [edg] (25)-- (32);
\draw [edg] (26)-- (33);
\draw [edg] (23)--(27)--(35);
\draw [edg] (27)-- (36);
\draw [edg] (28)-- (37);
\draw [edg] (5)--(24)--(30)--(42);
\draw [edg] (24)--(29)--(39);
\draw [edg] (29)-- (40);
\draw [edg] (30)-- (41);

\draw [edg] (46)--(43)--(6)--(44)--(49);
\draw [edg] (43)-- (47);
\draw [edg] (44)-- (48);
\draw [edg] (6)--(45)--(51);
\draw [edg] (45)-- (50);

\draw [edg] (52)--(7)--(53);
\draw [edg] (7)-- (54);

\end{scope}

\end{tikzpicture}

\caption{EB trees for $(r,z,g)=(3,z,6)$ and
  $(r,z,g)=(3,z,7)$.}\label{fig:EB}
\end{figure} 

\section{An upper bound for $(1,z,g)$-mixed cages}\label{sec:upper}

In \cite{AHM19} the authors gave a pair of circulants which are
$(1,z,4)$-mixed graphs and provide the upper bounds in [{\sc ahm6}]
and [{\sc ahm7}]. They also proved that these upper bounds are sharp
for $z=1$ and $z=2$. However, our computational results show that
these bounds are attained at least for $z\leq 8$. Moreover, the same
idea can be generalized to all $g\geq 3$ and it still matches the order of
all of our computed mixed cages for $r=1$. The same pattern of jumps
is used and only the order of the circulant required
amendment. 

\begin{defin}
  Let $z\geq 1$ and $g\geq 3$. Define 
$$N(z,g)=\begin{cases}
z(g-1)+2 & \textup{$z$ even,}\\
(z+1)(g-1) & \textup{$z$ odd.}\\
\end{cases}$$
Now, take $m=\frac{N(z,g)}{2}$ (which is an integer) and define the
following circulant:
$$M(z,g) = \overrightarrow{C}_{2m}
\left(1,2,\ldots,\left\lceil \frac{z}{2}\right\rceil;\pm m;
m+1,m+2,\ldots,m+\left\lfloor \frac{z}{2} \right\rfloor\right).$$
\end{defin}

In the previous definition, the semicolons are there just for
readability, separating three distinct patterns in the jumps. Also,
note that $m \equiv -m \mod 2m$, and hence this jump would produce
pairs of antiparallel arcs, which are forbidden; by our convention,
they must be understood as edges connecting antipodal vertices. The
``$\pm$'' sign is there just for emphasis.

\begin{theorem}\label{thm:MCirc}
$M(z,g)$ is a $(1,z,g)$-mixed graph, hence $n(1,z,g)\leq N(z,g)$.  
\end{theorem}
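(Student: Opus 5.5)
The plan is to verify three things: that $M(z,g)$ is a legitimate mixed graph that is $(1,z)$-regular, that every cycle has length at least $g$, and that some cycle has length exactly $g$. Throughout, write $n=2m=N(z,g)$, $c=\lceil z/2\rceil$, $f=\lfloor z/2\rfloor$ (so $c+f=z$), and $S=\{1,\ldots,c\}\cup\{m+1,\ldots,m+f\}$ for the set of arc jumps.

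\emph{Regularity and legality.} Since $g\geq 3$, one checks directly that $m\geq z+1$, and in fact $m-f>c$. Then no $j\in S$ has $-j\in S$ modulo $2m$:
\begin{itemize}
\item for $j\leq c$, $-j=2m-j>m+f$;
\item for $j=m+i$, $-j\equiv m-i>c$.
\end{itemize}
Also $m\notin S$. Hence the jumps in $S$ give genuine arcs, $z$ out and $z$ in at each vertex, while the jump $\pm m$ gives exactly one edge per vertex, a perfect matching of antipodal vertices. The disjoint condition holds because all the jumps are distinct residues.

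\emph{Girth at least $g$.} Take any cycle of length $L$ and lift each step to an integer displacement. An arc step is $d\in S$ and an edge step is $+m$, since $-m\equiv m$. Write each step as $m\varepsilon+\delta$ with $\varepsilon\in\{0,1\}$ and $\delta\in[0,c]$, where $\delta\leq f$ for big arcs and $\delta=0$ for edges. Let $E$ be the number of steps with $\varepsilon=1$ and $A$ the number of arc steps. Closing up gives
\[
\textstyle m E+\sum\delta\equiv 0 \pmod{2m}.
\]
A cycle cannot consist only of edges, because the edges form a matching, so $\sum\delta>0$. It follows that $\sum\delta\geq m$, and that $E$ is odd when $\sum\delta=m$.
\begin{itemize}
\item If $z$ is even, $m=c(g-1)+1>c(g-1)$, so $A\geq g$ and hence $L\geq g$.
\item If $z$ is odd, $m=c(g-1)$, so $A\geq g-1$. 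Equality forces every arc to be a small arc of jump exactly $c$ with $\sum\delta=m$. Then $E$ must be odd, yet small arcs have $\varepsilon=0$, so the cycle contains at least one edge and $L\geq g$.
\end{itemize}
If $\sum\delta\geq 2m$, the bound on $A$ only improves.

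\emph{A $g$-cycle exists.} It suffices to exhibit a closed walk whose partial sums are strictly increasing within $(0,2m)$ and end at $2m$; such a walk visits distinct vertices and is therefore a cycle.
\begin{itemize}
\item For $z$ odd, use $g-1$ arcs of jump $c$ (total $m$) followed by one edge (jump $m$).
\item For $z$ even, use one big arc $m+\delta_0$ and $g-1$ small arcs $\delta_1,\dots,\delta_{g-1}\in[1,c]$ with $\delta_0+\cdots+\delta_{g-1}=m$. This is possible since $m=c(g-1)+1\leq c(g-1)+f$ and $m\geq g$.
\end{itemize}
This gives girth exactly $g$ and hence $n(1,z,g)\leq N(z,g)$.

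The main obstacle is the lower bound on the girth, specifically the equality case for odd $z$, where the parity argument on $E$ is essential. I would also double-check the small boundary cases ($g=3$, $z=1,2$) to confirm the inequality $m-f>c$ needed for legality.
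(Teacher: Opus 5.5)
Your proof is correct and follows essentially the paper's route. You write each jump as $m\varepsilon$ plus a remainder of size at most $\lceil z/2\rceil$, deduce that the remainders must sum to a positive multiple of $m$ (which forces at least $g$ steps), and exhibit the cycle through $0,c,2c,\ldots,(g-1)c$; your even-case choice $\delta_0=1$, $\delta_i=c$ is the paper's cycle up to rotation. The only differences are cosmetic: you check legality of the circulant explicitly, which the paper calls clear, and you settle the odd-$z$ equality case by the parity of $E$ rather than by observing that $b=0$.
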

\begin{proof}
  It is clear that $M(z,g)$ is 1-regular in edges and $z$-regular in
  arcs. It remains to be proved that its girth is exactly $g$. Take
  $n=N(z,g)$, $m=\frac{n}{2}$ and
  $h = \left\lceil\frac{z}{2}\right\rceil$.

  Assume first that $z$ is even. Since $n-(g-1)h = (g-1)h+2=m+1$, we
  have that $(g-1)h\ra n$ is an arc and the cycle
  $0\ra h \ra 2h \ra \cdots \ra (g-1)h\ra n = 0$ proves that
  $g(M(z,g))\leq g$. Now assume there is a shorter cycle
  $(x_{0},x_{1},\ldots,x_{\ell})$ with $x_{\ell}= x_{0}$ and
  $\ell \leq g-1$. The differences $j_{k}=x_{k+1}-x_{k}$ must belong
  to the set of jumps and must satisfy
  $\sum_{k=0}^{\ell-1} j_{k}\equiv 0 \mod n$. Now, the jumps $j_{k}$
  are either small jumps $j_{k}=s_{k}$ for $s_{k}\in\{1,2,\ldots,h\}$
  or big jumps $j_{k}=m+s_{k}$ for $s_{k}\in\{0,1,\ldots,h\}$. Assume
  there are a total of $b$ big jumps, then
  $\sum_{k=0}^{\ell-1} j_{k} =bm + \alpha$ where
  $\alpha =\sum_{k=0}^{\ell-1} s_{k}$. Now, we have
  $\alpha \leq h\ell \leq h(g-1)=m-1$. Since no two consecutive jumps
  could be both equal to the antipodal jump $\pm m$, we also have
  $\alpha \geq \ell/2\geq 1$. It follows that
  $\sum_{k=0}^{\ell-1} j_{k} = bm+ \alpha$ for some
  $\alpha\in \{1,2, \ldots, m-1\}$. Therefore
  $\sum_{k=0}^{\ell-1} j_{k} = bm+ \alpha \not\equiv 0 \mod n$, a
  contradiction.
 
  Now assume that $z$ is odd. In this case, $h=\frac{z+1}{2}$. Since
  $n-(g-1)h = \frac{n}{2}$, we have that $(g-1)h\ed n$ is an edge and
  $0\ra h \ra 2h \ra \cdots \ra (g-1)h\ed n = 0$ is a cycle that proves
  $g(M(z,g))\leq g$. Now, assume there is a shorter cycle
  $(x_{0},x_{1},\ldots,x_{\ell})$ with $x_{\ell}= x_{0}$ and
  $\ell \leq g-1$. The differences $j_{k}=x_{k+1}-x_{k}$ must belong
  to the set of jumps and must satisfy
  $\sum_{k=0}^{\ell-1} j_{k}\equiv 0 \mod n$. As before,
  $\sum_{k=0}^{\ell-1} j_{k} = bm+ \alpha$, but this time
  $\alpha \leq h\ell\leq h(g-1) = m$. However the exact value
  $\alpha = m$ can only be achieved when
  $j_{k} =h = \left\lceil \frac{z}{2} \right\rceil$ for all $k$ and in
  this case we would have $b=0$. Therefore we have either
  $\sum_{k=0}^{\ell-1} j_{k} = \alpha$ with
  $\alpha\in \{1,2,\ldots,m\}$ or
  $\sum_{k=0}^{\ell-1} j_{k} = bm+ \alpha$ with
  $\alpha\in \{1,2,\ldots,m-1\}$, which in both cases imply
  $\sum_{k=0}^{\ell-1} j_{k}\not\equiv 0 \mod n$. A contradiction.
\end{proof}

The above theorem (see also [{\sc dp2}] and [{\sc dp3}]) is a
generalization of previous partial results [{\sc ahm6}], [{\sc ahm7}],
[{\sc e1}] and [{\sc e2}], as well as an improvement on the bounds in
[{\sc jj2}] and [{\sc jj3}].  Recall from Section~\ref{sec:prelim}
that the circulant
$\BCW(z,g) = \overrightarrow{C}_{n}(1,2,\ldots,z)$ with $n=z(g-1)+1$ is
a $(0,z,g)$-mixed graph (a digraph, actually), which was conjectured
in \cite{BCW70} to be minimal (Conjecture~\ref{conj:BCW}).  We point
out that in case $z$ is even, the
construction above requires only one additional vertex
($|M(z,g)|=|\BCW(z,g)|+1$) to add all the $n/2$ required
edges. Moreover, when $z$ is even, this construction coincides with
the following composition $M(z,g)=\BCW(\frac{z}{2},g)[K_{2}]$; this
fact was noted by Exoo in the case $z=2$ \cite{Exo23} (see also [{\sc
  e2}]). We already mentioned that all our computational results
suggest that this bound is sharp.  Considering all of this, we propose
the following:

\begin{conjecture}\label{conj:1zg}
$n(1,z,g) = N(z,g)=\begin{cases}
z(g-1)+2 & \textup{$z$ even,}\\
(z+1)(g-1) & \textup{$z$ odd.}\\
\end{cases}$  
\end{conjecture}

\section{Mixed cages of girth three}\label{sec:g3}

The extended bound for $g=3$ is $n(r,z,3)\geq r+2z+1$, but actually, in
this case, the equality holds:

\begin{theorem}\label{thm:g3}
$n(r,z,3)=r+2z+1$.
\end{theorem}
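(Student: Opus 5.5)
By Theorem~\ref{thm:EB} we already have $n(r,z,3)\geq \EB(r,z,3)=r+2z+1$, so it remains to exhibit an $(r,z,3)$-mixed graph on exactly $n=r+2z+1$ vertices. Since each vertex then has exactly $r+z+z=n-1$ neighbours of some kind, such a graph must be a ``complete'' mixed graph: every pair of distinct vertices is joined by exactly one edge or one arc. I would build it as a circulant on $\mathbb{Z}_{n}$. There is an arc $x\ra y$ whenever $y-x\in\{1,2,\ldots,z\}$, and an edge $x\ed y$ whenever $y-x\in\{z+1,z+2,\ldots,n-z-1\}$.

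First I would check regularity and the disjoint condition. The out-jumps $\{1,\ldots,z\}$ and the in-jumps $\{n-z,\ldots,n-1\}$ are disjoint because $z<n-z$, which holds as $n=r+2z+1>2z$. The edge jump set $\{z+1,\ldots,n-z-1\}$ is closed under negation modulo $n$, so it really defines undirected edges. It has $n-2z-1=r$ elements, so every vertex has degree $r$. When $r$ is odd, $n$ is even and this set contains the antipodal jump $n/2$, which contributes one edge per vertex, consistent with the paper's convention for $\pm m$ jumps. The three jump sets partition $\mathbb{Z}_{n}\setminus\{0\}$, so there are no parallel connections and every vertex has out-degree $z$ and in-degree $z$.

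Next I would exhibit a $3$-cycle, so that the girth is exactly $3$ (it is automatically at least $3$). Take the vertices $0$, $z$ and $2z+1$, which are distinct because $n>2z+1$ (using $r\geq 1$). We have $0\ra z$, since the jump is $z$, and $z\ed 2z+1$, since the jump $z+1$ lies in the edge range (nonempty as $r\geq1$). Going forward from $2z+1$ to $n\equiv 0$ is a jump of $n-2z-1=r$. If $r\leq z$ this is an arc $2z+1\ra 0$. If $r\geq z+1$, then $z+1\leq r\leq n-z-1=r+z$, so it is an edge $2z+1\ed 0$. Either way $(0,z,2z+1,0)$ is a $3$-cycle traversed consistently with the arc directions.

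The only delicate point is this choice of triangle: a complete mixed graph can easily consist only of non-cyclic triangles as in Figure~\ref{fig:untriangles}. The case split on $r\leq z$ versus $r\geq z+1$ is what guarantees that the closing step is traversable. Combining the construction with the lower bound gives $n(r,z,3)=r+2z+1$.
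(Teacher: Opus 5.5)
Your proof is correct and is essentially the paper's: you invoke Theorem~\ref{thm:EB} for the lower bound, and you build exactly the same circulant on $\mathbb{Z}_{r+2z+1}$ (your edge range $\{z+1,\ldots,n-z-1\}$ is the paper's $\{z+1,\ldots,r+z\}$). The only difference is the witnessing triangle: the paper uses $(0,z,z+1,0)$, i.e.\ $0\ra z\ra z+1\ed 0$, which works for all $r,z\geq 1$ and so avoids your case split on $r\leq z$ versus $r\geq z+1$.
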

\begin{proof}
  By the extended bound, $n(r,z,3)\geq r+2z+1$. Let $G$ be a mixed
  graph of order $r+2z+1$, with vertex set
  $V=\{i : i\in \mathbb{Z}_{2z+r+1}\}$ and the following adjacencies:

$$ \begin{array}{lll}
N^{+}(i) &=& \{i+\alpha: 1\leq \alpha \leq z\},\\
N(i) &=& \{i+\alpha: z+1 \leq \alpha \leq r+z\},\\
N^{-}(i) &=& \{i+\alpha: r+z+1\leq \alpha \leq r+2z\}.
\end{array}$$

It should be clear that $(0,z,z+1,0)$ is
always a 3-cycle, and hence $G$ is an $(r,z,3)$-mixed graph.
\end{proof}

\section{Mixed cages of girth four}\label{sec:g4}

Recall that $\EB(r,z,4) =2r+2z+2 \leq n(r,z,4)$, and by
Theorem~\ref{JJ_bound} we have that $2r+2z+2\leq n(r,z,4) \leq 2r+4z$
(compare with [{\sc ahm2}] and [{\sc jj1}]). In this section we shall
prove Theorem~\ref{thm:2z-2} which states that for $(r,z)\neq (1,2)$,
we get:
\begin{align*}
r\leq 2z-2\implies &n(r,z,4)\geq 3r+2z+2,\\
r\geq 2z-2\implies &n(r,z,4) = 2r+4z. 
\end{align*}

We note that $3r+2z+2 \geq 2r+4z$ precisely when $r\geq 2z-2$ and that
both bounds coincide exactly when $r=2z-2$. Comparing with our
computational results in Table~\ref{tab:g=4}, we also point out that
$n(2,3,4)=16$ and $n(3,3,4)=18$ do attain the value $2r+4z$, although
these cases are not covered by Theorem~\ref{thm:2z-2}. On the other
hand, also comparing with Table~\ref{tab:g=4}, the cases $r=1$ with
$z\geq 2$, as well as $n(2,4,4)=16$ and $n(2,5,4)=20$, confirm that
the upper bound $2r+4z$ is not attained in general when $r< 2z-2$.

For the rest of the section we shall consider an $(r,z,4)$-mixed graph
$G_{0}$ that does not attain the upper bound and hence
$2r+2z+2\leq |G_{0}| < 2r+4z$.  Given an arc $a\ra b$ in $G_{0}$, we
can construct an EB tree in $G_{0}$ starting with it, and then we get
a subgraph of $G_{0}$ as in Figure~\ref{fig:G0}. The $m$ extra vertices
$\{t_{1},t_{2},\ldots, t_{m}\}$ are the vertices of $G_{0}$ that do
not belong to the EB tree. For future reference, we define
$A=N^{-}(a)$, $B=N^{+}(b)$, $P=N(a)$, $Q=N(b)$ and
$T=\{t_{1},t_{2}, \ldots, t_{m}\}$. We summarize our assumptions as
follows:
\begin{align*}
&r\geq 1, z\geq 1, \\
&0\leq m < 2z-2,\\
&|G_{0}| = 2r+2z+2+m, \\
&G_{0}\textup{ is an }(r,z,4)\textup{-mixed graph}.
\end{align*}

By the girth condition, we know that all $3$-cycles are forbidden in
$G_{0}$. Moreover, the next two lemmas show that the non-cyclic
triangles are also forbidden when $r$ is large enough (see
Figure~\ref{fig:3cycles} and Figure~\ref{fig:untriangles}).

\begin{figure}[!ht]
\begin{center}
\begin{tikzpicture}[scale = 1]

\node[stw] (1) at (2,2) {$a$};
\node[stw] (2) at (5,2) {$b$};

\node[stw] (3) at ( -0.60, 3.5 ) {$a_{z}$};
\node[stw] (4) at ( -1., 2. ) {$a_{1}$};
\node[stw] (5) at ( -0.60, 0.5 ) {$p_{r}$};
\node[stw] (6) at ( 0.5, -0.60
 ) {$p_{1}$};
\node[stw] (7) at ( 2.6, -1.4 ) {$t_{1}$};
\node[stw] (8) at ( 4.4, -1.4 ) {$t_{m}$};
\node[stw] (9) at ( 6.5, -0.6 ) {$q_{1}$};
\node[stw] (10) at ( 7.6, 0.5 ) {$q_{r}$};
\node[stw] (11) at ( 8., 2. ) {$b_{1}$};
\node[stw] (12) at ( 7.6, 3.5 ) {$b_{z}$};

\node[dot] at (-0.95,2.52) {};
\node[dot] at (-0.9,2.78) {};
\node[dot] at (-0.82,3.03) {};

\node[dot] at (-0.3,0.07) {};
\node[dot] at (-0.12,-0.12) {};
\node[dot] at (0.07,-0.3) {};

\node[dot] at (3.22,-1.4) {};
\node[dot] at (3.5,-1.4) {};
\node[dot] at (3.78,-1.4) {};

\node[dot] at (6.93,-0.3) {};
\node[dot] at (7.12,-0.12) {};
\node[dot] at (07.3,0.07) {};

\node[dot] at (7.95,2.52) {};
\node[dot] at (7.9,2.78) {};
\node[dot] at (7.82,3.03) {};

\draw[edg] (5)--(1)--(6);
\draw[edg] (9)--(2)--(10);

\draw [arr] (1) -- (2);
\draw [arr] (3) -- (1);
\draw [arr] (4) -- (1);
\draw [arr] (2) -- (11);
\draw [arr] (2) -- (12);

\node[rotate=-15] at ($(2,2)+(165:3.3)$) {$\left\{\vphantom{\rotatebox{90}{mmmm}}\right.$};
\node at ($(2,2)+(166:3.7)$) {$A$};
\node[rotate=45] at ($(2,2)+(224.5:3.3)$) {$\left\{\vphantom{\rotatebox{90}{mmmm}}\right.$};
\node at ($(2,2)+(224.5:3.65)$) {$P$};
\node[rotate=90] at ($(3.5,2)+(270:3.75)$) {$\left\{\vphantom{\rotatebox{90}{mmmme}}\right.$};
\node at ($(3.5,2)+(270:4.1)$) {$T$};
\node[rotate=-163] at ($(5,2)+(15:3.3)$) {$\left\{\vphantom{\rotatebox{90}{mmmm}}\right.$};
\node at ($(5,2)+(15:3.7)$) {$B$};
\node[rotate=135] at ($(5,2)+(-45.5:3.3)$) {$\left\{\vphantom{\rotatebox{90}{mmmm}}\right.$};
\node at ($(5,2)+(-45.5:3.65)$) {$Q$};
\end{tikzpicture}
\end{center}
\caption{A subgraph of graph $G_{0}$.}
\label{fig:G0}
\end{figure}

\begin{lemma}\label{lem:NoTT3}
If $r\geq m$, then \utA is forbidden in $G_{0}$.
\end{lemma}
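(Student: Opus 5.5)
Let $a\ra w$, $w\ra b$, $a\ra b$ be such a triangle in $G_{0}$. The plan is to count the edge-neighbours of $w$ and find that $w$ has room for fewer than $r$ of them. The key observation is that $a\ra b$ is an arc, so the construction described just before the lemma applies to it. Building the EB tree from the arc $a\ra b$ gives the partition of $V(G_{0})$ shown in Figure~\ref{fig:G0}:
\[
\{a,b\},\quad A=N^{-}(a),\quad B=N^{+}(b),\quad P=N(a),\quad Q=N(b),\quad T,\qquad |T|=m .
\]
Throughout, I use that $G_{0}$ has girth $4$, so every $3$-cycle of Figure~\ref{fig:3cycles} is forbidden.

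First I locate $w$, which must lie in $T$.
\begin{itemize}
\item $w\neq a,b$, trivially.
\item $w\notin A\cup P$: since $a\ra w$, the disjoint condition at $a$ excludes $w\in N^{-}(a)\cup N(a)$.
\item $w\notin B\cup Q$: since $w\ra b$, the disjoint condition at $b$ excludes $w\in N^{+}(b)\cup N(b)$.
\end{itemize}
Hence $w\in T$.

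Next I show that $N(w)\subseteq T\setminus\{w\}$ by excluding every other part.
\begin{itemize}
\item $a,b\notin N(w)$, because $w$ is joined to them by arcs and there are no parallel connections.
\item If $x\in A$ and $x\ed w$, then $(x,a,w,x)$, i.e.\ $x\ra a\ra w\ed x$, is a $3$-cycle.
\item If $p\in P$ and $p\ed w$, then $p\ed a\ra w\ed p$ is a $3$-cycle.
\item If $y\in B$ and $y\ed w$, then $w\ra b\ra y\ed w$ is a $3$-cycle.
\item If $q\in Q$ and $q\ed w$, then $w\ra b\ed q\ed w$ is a $3$-cycle.
\end{itemize}

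Therefore $r=|N(w)|\leq |T\setminus\{w\}| = m-1$. This contradicts the hypothesis $r\geq m$, so the triangle cannot occur.

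The only step needing care is checking that each of the four exclusions really yields a directed-consistent $3$-cycle, since a non-cyclic triangle would not be forbidden. This is just the orientation check along each listed cycle, and each one is a pattern appearing in Figure~\ref{fig:3cycles}. I do not expect a genuine obstacle. The argument in fact only needs $r>m-1$, which is exactly the stated hypothesis $r\geq m$.
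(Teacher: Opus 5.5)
Your proof is correct and follows the paper's argument. You place $w$ in $T$ using the disjoint condition at $a$ and $b$, exclude $A,P,Q,B$ from $N(w)$ by exhibiting $3$-cycles, and conclude $r=|N(w)|\leq |T\setminus\{w\}|=m-1<m$; the only difference is that you write out the four $3$-cycles and the reason for $N^{+}(a)\cap N^{-}(b)\subseteq T$, which the paper states without detail.
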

\begin{proof}
  By way of contradiction, assume $r\geq m$ and take a non-cyclic
  triangle \utA in $G_{0}$. 
  Construct an EB tree in $G_{0}$ starting with $a\ra b$, as in
  Figure~\ref{fig:G0}. Clearly $N^{+}(a)\cap N^{-}(b)\subseteq T$ and
  hence $w \in T$. But now, the girth condition gives us
  $N(w)\cap X = \varnothing$ for every $X=A,P,Q,B$. Also,
  $a,b\not\in N(w)$ since, by construction, $T$ is disjoint from
  $N(a)=P$ and $N(b)=Q$. It follows that
  $N(w)\subseteq T\setminus \{w\}$. But
  $|N(w)|=r>m-1= |T\setminus \{w\}|$. A contradiction.
\end{proof}

\begin{lemma}
  \label{lem:m+1}
  If $r\geq m+1$ and $(r,z)\neq (1,2)$ then \utB and
  \utC are forbidden in $G_{0}$.
\end{lemma}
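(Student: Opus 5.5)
By the arc-reversal symmetry it suffices to forbid \utB. Reversing every arc of $G_{0}$ gives another $(r,z,4)$-mixed graph of the same order, hence with the same $m$. This reversal maps \utB (arcs $a\ra b$, $w\ra b$, edge $a\ed w$) onto \utC (a vertex sending arcs to both ends of an edge). So I would argue only for \utB and then invoke the symmetry.

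Assume \utB occurs, with $a\ra b$, $a\ed w$, $w\ra b$, and build the EB tree from $a\ra b$ as in Figure~\ref{fig:G0}. Then $w\in P$. I would locate $N(w)$ and $N^{-}(w)$ using the girth condition, checking each forbidden $3$-cycle explicitly:
\begin{itemize}
\item $N(w)\setminus\{a\}$ misses $A$, $P$, $Q$, $B$ and $b$, so $N(w)\setminus\{a\}\subseteq T$.
\item $N^{-}(w)$ misses $P$ (via $p\ra w\ed a\ed p$), $Q$ (via $q\ra w\ra b\ed q$), $B$ (via $b_{i}\ra w\ra b\ra b_{i}$), and also $a$ and $b$. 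Hence $N^{-}(w)\subseteq A\cup T$.
\end{itemize}
By the disjoint condition, $N(w)$ and $N^{-}(w)$ use disjoint parts of $T$. This gives
$$m=|T|\;\geq\; (r-1)+\bigl(z-|N^{-}(w)\cap A|\bigr).$$
Together with $r\geq m+1$, this forces the extremal situation: $m=r-1$, $N(w)=\{a\}\cup T$, and $N^{-}(w)=A$, that is, $A\ra w$.

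The main obstacle is to kill this extremal configuration, and I would first exploit $A\ra w$. Each $a_{i}\in A$ now sends arcs to both $a$ and $w$, and $a\ed w$, so $a_{i}$ is the tail of a \utC. I would then rebuild the analysis around a second arc, either $a_{i}\ra a$ or $w\ra b$ with $w$ as the new ``$a$''. The aim is to use the rigid structure obtained ($T$ entirely inside $N(w)$, all of $A$ inside $N^{-}(w)$) to place the $z-1$ remaining out-neighbours of $w$. These cannot lie in $A$, $P$ or $T$, and they must avoid creating $3$-cycles through the $b_{i}$ and $q_{j}$, so they are squeezed into $Q\cup B$.

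Next I would run the same count at $b$, or at some $t\in T$, whose edge-neighbourhood contains $w$. The goal is to show that the in- and out-degrees of these vertices cannot be filled without a $3$-cycle. I expect the two counts to contradict each other except when $r-1=m=0$ and the out-degree budget is exactly $z-1=1$. This matches the excluded case $(r,z)=(1,2)$, where $m<2z-2=2$ and $r\geq m+1$ are compatible and a genuine example may exist. If this direct count is too weak, my fallback is to apply Lemma~\ref{lem:NoTT3}, which holds here because $r\geq m$: it forbids the transitive triangle, so every pair $a_{i}\ra a$, $a_{i}\ra w$ restricts arcs between $a$ and the out-neighbours of $w$, giving the missing contradiction.
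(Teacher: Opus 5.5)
\paragraph{Verdict: there is a genuine gap.} Your reduction to \utB{} by reversing all arcs is fine. Your first count is also correct: from $N(w)\setminus\{a\}\subseteq T$, $N^{-}(w)\subseteq A\cup T$ and $r\geq m+1$ you rightly get $m=r-1$, $N(w)=\{a\}\cup T$ and $A\ra w$. This is the content of items~1--2 of the paper's proof, together with item~7 for the vertex $w$.

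The gap is everything after that point. The argument becomes ``I would'' and ``I expect'', and no contradiction is ever derived. You do not say what the counts at $b$ or at a vertex of $T$ would show, and neither obviously yields a contradiction. Your fallback use of Lemma~\ref{lem:NoTT3} is also misaimed: the arcs $a_i\ra a$, $a_i\ra w$ sit over the edge $a\ed w$, so that lemma says nothing about them. Where the lemma is really needed is to exclude $B$ from $N^{+}(a)$ and from $N^{+}(w)$. For example, $w\ra b\ra b'$ together with $w\ra b'$ is a \utA. So in fact $N^{+}(w)\setminus\{b\}\subseteq Q$, not just $Q\cup B$.

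\paragraph{The missing idea: fill up the in-neighbourhoods of the vertices of $B$.} Run the same forcing on $N^{+}(a)$. It misses $A$ and $P$ (disjointness), $T$ (via $a\ra t\ed w\ed a$) and $B$ (Lemma~\ref{lem:NoTT3}). Hence $Q_{1}=N^{+}(a)\setminus\{b\}\subseteq Q$ with $|Q_{1}|=z-1$. For $q\in Q_{1}$, the girth condition together with $|T|=r-1$ forces $N(q)=\{b\}\cup T$, and then $N^{+}(q)=B$.

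The paper continues with $P_{1}=N^{-}(b)\setminus\{a\}$ and derives $P_{1}\ed T$, $A\ra P_{1}$ and $P_{1}\ra Q_{2}$. The last step uses two things. First, $T\neq\varnothing$, which is where $(r,z)\neq(1,2)$ enters. Second, the standing hypothesis $m<2z-2$, which your sketch never mentions, used to force $r=2z-2$. Finally, a vertex of $Q_{2}$ must send its $z$ arcs into $P_{2}$, which has only $z-1$ vertices.

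From your configuration there is also a shorter close:
\begin{itemize}
\item For $q\in N^{+}(w)\setminus\{b\}$, the same forcing gives $N(q)=\{b\}\cup(P\setminus\{w\})$ and $N^{+}(q)=B$.
\item If $r\geq 2$, comparing edge-neighbourhoods shows these $z-1$ vertices are disjoint from $Q_{1}$. Then every $b'\in B$ receives at least $1+2(z-1)>z$ arcs, a contradiction.
\item If $r=1$, then $|Q_{1}|=z-1\leq |Q|=1$ and $z\geq 2$ give exactly the excluded case $(r,z)=(1,2)$.
\end{itemize}

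Some chain of this kind has to be supplied. As written, your proposal stops where the real work begins.
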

\begin{proof}
  We shall put some expressions in bold to emphasize partial results
  that will be used later within this proof. Since both kinds of
  non-cyclic triangles are dual to each other, we may consider just
  the first one. By way of contradiction, assume $r\geq m+1$ and take
  a non-cyclic triangle \utB in $G_{0}$.  The existence of such a
  triangle readily implies that $\bm{z\geq 2}$. Construct an EB tree
  in $G_{0}$ starting with $a\ra b$, as in Figure~\ref{fig:G0}. It
  follows that $w \in P$. We find it convenient to further split
  vertices in $P$ and $Q$ so that $Q=Q_{1}\cupdot Q_{2}$ and
  $P=P_{1}\cupdot P_{2}$ as in Figure~\ref{fig:G1} (solid part). The
  exact definitions of these decompositions will be given in items~3
  and~5 below.  In order to give full details in a compact manner we
  shall use expressions like
  \begin{center}
    ``$Y\cap X = \varnothing$ for $X=U, V, W$ due to $(d,g,l)$''
  \end{center} 
  to mean ``a vertex in $Y\cap U$ would violate the \textbf{d}isjoint
  condition; a vertex in $Y\cap V$ would violate the \textbf{g}irth
  condition; and a vertex in $Y\cap W$ would violate
  \textbf{L}emma~\ref{lem:NoTT3}''. The numbering of the following
  items matches that of the dashed arcs and edges in
  Figures~\ref{fig:G1} and~\ref{fig:G2}.


 \begin{enumerate}
 \item $\bm{w{\rab} b}$ and $\bm{w \in P}$: Since $a\ed w \ra b$.

 \item $\bm{w{\edb} T}$: $N(w)\cap X=\varnothing$ for
   $X=A,P,Q,B,\{b\}$ due to $(g,g,g,g,d)$. Hence
   $N(w)\subseteq T\cup\{a\}$, but this is only possible when
   $r\leq m+1$ and hence $\bm{r=m+1}$ (otherwise we get a
   contradiction here). Then we have $N(w)=T\cup \{a\}$. Thus
   $w\ed T$.

 \item $\bm{a{\rab} Q_{1}}$: $N^{+}(a)\cap X=\varnothing$ for
   $X=A,P,T,B$ due to $(d,d,g,l)$. Hence
   $N^{+}(a)\subseteq Q\cup\{b\}$, but this is only possible when
   $\bm{z\leq r+1}$ (otherwise, we get a contradiction here). Now,
   define $Q_{1}=N^{+}(a)\setminus\{b\}$ and $Q_{2}=Q\setminus Q_{1}$,
   note that $|Q_{1}|=z-1\geq 1$ and $|Q_{2}|=r-z+1\geq 0$. Thus we
   have $a\ra Q_{1}$.
   \end{enumerate}

   \noindent At this point, we know that $z\geq 2$, $r=m+1$ and
   $z\leq r+1$. It follows that $m\neq 0$ since otherwise we would
   have $(r,z)=(1,2)$, which is explicitly excluded in the statement
   of this lemma. It follows that $\bm{m\geq 1}$ and, in particular, that
   $T\neq \varnothing$. This will be required in item~9.

\begin{enumerate}[start=4]
 \item $\bm{Q_{1}{\edb} T}$: Given $q\in Q_{1}$ we get
   $N(q)\cap X=\varnothing$ for $X=A,P,Q,B,\{a\}$ due to
   $(g,g,g,g,d)$. Hence $N(q)\subseteq T\cup\{b\}$ and, since $r=m+1$,
   $N(q)= T\cup\{b\}$. Thus $Q_{1}\ed T$.
   \end{enumerate}

   \noindent Items~5 and~6 will now supersede items~1 and~2, as shown
   in Figure~\ref{fig:G2}.

   \begin{enumerate}[start=5]
   \item $\bm{b {\lab} P_{1}}$: $N^{-}(b)\cap X=\varnothing$ for
     $X=A,T,Q,B$ due to $(l,g,d,d)$. Hence
     $N^{-}(b)\subseteq P\cup\{a\}$. Now define
     $P_{1}=N^{-}(b)\setminus\{a\}$ and $P_{2}=P\setminus P_{1}$, note
     that $|P_{1}|=z-1\geq 1$ and $|P_{2}|=r-z+1\geq 0$. Thus we have
     $b\la P_{1}$.

   \item $\bm{P_{1}{\edb} T}$: Given $p\in P_{1}$ we get
     $N(p)\cap X=\varnothing$ for $X=A,P,Q,B,\{b\}$ due to
     $(g,g,g,g,d)$. Hence $N(p)\subseteq T\cup\{a\}$ and, since
     $r=m+1$, $N(p)= T\cup\{a\}$. Thus $P_{1}\ed T$.

   \item $\bm{P_{1}{\lab} A}$: Given $p\in P_{1}$ we get
     $N^{-}(p)\cap X=\varnothing$ for $X=P,T,Q,B,\{a\},\{b\}$ due to
     $(g,d,g,g,d,d)$. Hence $N^{-}(p)= A$. Thus $P_{1}\la A$.

   \item $\bm{Q_{1}{\rab} B}$: Given $q\in Q_{1}$ we get
     $N^{+}(q)\cap X=\varnothing$ for $X=A,P,T,Q,\{a\},\{b\}$ due
     to $(g,g,d,g,d,d)$. Hence $N^{+}(q)= B$. Thus $Q_{1}\ra B$.

   \item $\bm{P_{1}{\rab} Q_{2}}$: Given $p\in P_{1}$ we get
     $N^{+}(p)\cap X=\varnothing$ for $X=A,P,T,Q_{1},B,\{a\}$ due to
     $(d,g,d,g,l,d)$. Hence $N^{+}(p)\subseteq Q_{2}\cup\{b\}$. But
     this is only possible when $z\leq (r-z+1)+1$, that is, when
     $r\geq 2z-2$. Since we also know that $r=m+1 \leq 2z-2$, we get
     $r=2z-2$. It follows that $\bm{|Q_{2}|= r-z+1 = z-1\geq 1}$ and
     $N^{+}(p)= Q_{2}\cup\{b\}$. Thus $P_{1}\ra Q_{2}$.
 \end{enumerate}

 Note that for $b'\in B$ we already have $N^{-}(b') =Q_{1}\cup \{b\}$,
 hence the vertices of $B$ already have their in-neighborhood
 completed and cannot receive any more in-neighbors. In particular,
 if we take $q\in Q_{2}$, we have $N^{+}(q)\cap B =
 \varnothing$. Besides that, we also have $N^{+}(q)\cap X=\varnothing$
 for $X=A,P_{1},T,Q,\{a\},\{b\}$ due to $(g,d,g,g,g,d)$. Hence
 $N^{+}(q)\subseteq P_{2}$, but this is only possible when
 $z\leq r-z+1 = z-1$ (see item~9). A contradiction.
\end{proof}

\begin{figure}[!ht]
\begin{center}
\begin{tikzpicture}[scale = 1]

\node[stw] (a) at (2,2) {$a$};
\node[stw] (b) at (5,2) {$b$};

\node[ellip1,rotate=75] (A) at ($(2,2)+(165:3cm)$) {};
\node[ellip2,rotate=125] (P2) at ($(2,2)+(215:3cm)$) {};
\node[ellip2,rotate=160] (P1) at ($(2,2)+(250:3cm)$) {};
\node[ellip1,rotate=0] (T) at ($(3.5,2)+(270:3.4cm)$) {};
\node[ellip1,rotate=105] (B) at ($(5,2)+(15:3cm)$) {};
\node[ellip2,rotate=55] (Q2) at ($(5,2)+(-35:3cm)$) {};
\node[ellip2,rotate=20] (Q1) at ($(5,2)+(-70:3cm)$) {};
\node at ($(2,2)+(165:3cm)$) {$A$};
\node at ($(2,2)+(215:3cm)$) {$P_{2}$};
\node at ($(2,2)+(250:3cm)$) {$P_{1}$};
\node at ($(3.5,2)+(270:3.4cm)$) {$T$};
\node at ($(5,2)+(15:3cm)$) {$B$};
\node at ($(5,2)+(-35:3cm)$) {$Q_{2}$};
\node at ($(5,2)+(-70:3cm)$) {$Q_{1}$};
\node[dot2,red] (p1) at ($(2,2)+(260:3.06cm)$) {};
\node[red] at ($(2,2)+(257.4:3.24cm)$) {$w$};

\draw[edg] (P2)--(a)--(P1);
\draw[edg] (Q1)--(b)--(Q2);

\draw [arr] (a) -- (b);
\draw [arr] (A) -- (a);
\draw [arr] (b) -- (B);
\draw [arr,red,dashed] (p1)--node[above, pos=0.16]{1} (b);
\draw [edg,red,dashed] (p1)--node[below]{2} (T);
\draw [arr,red,dashed] (a)--node[below, pos=0.15]{3} (Q1);
\draw [edg,red,dashed] (Q1)--node[below]{4} (T);
\end{tikzpicture}
\end{center}
\caption{Graph as in items 1--4 of the proof of
  Lemma~\ref{lem:m+1}.}
\label{fig:G1}
\end{figure}

\begin{figure}[!ht]
\begin{center}
\begin{tikzpicture}[scale = 1]

\node[stw] (a) at (2,2) {$a$};
\node[stw] (b) at (5,2) {$b$};

\node[ellip1,rotate=75] (A) at ($(2,2)+(165:3cm)$) {};
\node[ellip2,rotate=125] (P2) at ($(2,2)+(215:3cm)$) {};
\node[ellip2,rotate=160] (P1) at ($(2,2)+(250:3cm)$) {};
\node[ellip1,rotate=0] (T) at ($(3.5,2)+(270:3.4cm)$) {};
\node[ellip1,rotate=105] (B) at ($(5,2)+(15:3cm)$) {};
\node[ellip2,rotate=55] (Q2) at ($(5,2)+(-35:3cm)$) {};
\node[ellip2,rotate=20] (Q1) at ($(5,2)+(-70:3cm)$) {};
\node at ($(2,2)+(165:3cm)$) {$A$};
\node at ($(2,2)+(215:3cm)$) {$P_{2}$};
\node at ($(2,2)+(250:3cm)$) {$P_{1}$};
\node at ($(3.5,2)+(270:3.4cm)$) {$T$};
\node at ($(5,2)+(15:3cm)$) {$B$};
\node at ($(5,2)+(-35:3cm)$) {$Q_{2}$};
\node at ($(5,2)+(-70:3cm)$) {$Q_{1}$};

\draw[edg] (P2)--(a)--(P1);
\draw[edg] (Q1)--(b)--(Q2);

\draw [arr] (a) -- (b);
\draw [arr] (A) -- (a);
\draw [arr] (b) -- (B);
\draw [arr,red,dashed] (P1)--node[above, pos=0.3]{5} (b);
\draw [edg,red,dashed] (P1)--node[below]{6} (T);
\draw [arr] (a)--(Q1);
\draw [edg] (Q1)--(T);
\draw [arr,red,dashed] (A) to[bend left = 15]node[above, pos=0.4]{7} (P1);
\draw [arr,red,dashed] (Q1) to[bend left = 15]node[above, pos=0.6]{8} (B);
\draw [arr,red,dashed] (P1) to[bend left = 15]node[below, pos=0.4]{9} (Q2);
\end{tikzpicture}
\end{center}
\caption{Graph as in items 5--9 of the proof of
  Lemma~\ref{lem:m+1}.}
\label{fig:G2}
\end{figure}

Now we can prove the main theorem of this section.

\begin{theorem}\label{thm:2z-2}
We have $n(1,2,4)= 8$. Also, for $(r,z)\neq (1,2)$, we get:
\begin{align*}
r\leq 2z-2\implies &n(r,z,4)\geq 3r+2z+2,\\
r\geq 2z-2\implies &n(r,z,4) = 2r+4z. 
\end{align*}
\end{theorem}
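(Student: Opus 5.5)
The plan is to show that any $(r,z,4)$-mixed graph $G_{0}$ of order below $2r+4z$ must have $m\geq r$ extra vertices, which yields $|G_{0}|\geq 3r+2z+2$. The two implications then follow by comparing this with Theorem~\ref{JJ_bound}. The case $(r,z)=(1,2)$ is handled separately.

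\emph{Main argument.} Assume $(r,z)\neq(1,2)$ and let $G_{0}$ be an $(r,z,4)$-mixed graph with $|G_{0}|<2r+4z$. We are then in the setting of this section: $|G_{0}|=2r+2z+2+m$ with $0\leq m<2z-2$. Suppose, for contradiction, that $r\geq m+1$. By Lemmas~\ref{lem:NoTT3} and~\ref{lem:m+1}, $G_{0}$ contains no non-cyclic triangles of any of the three types, and by the girth condition no $3$-cycles either.

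Fix an arc $a\ra b$ and build the configuration of Figure~\ref{fig:G0}. Take $w\in N^{+}(a)\setminus\{b\}$.
\begin{itemize}
\item The disjoint condition gives $w\notin A\cup P$.
\item If $w\in Q$, then $a\ra w$, $a\ra b$, $w\ed b$ is a forbidden non-cyclic triangle of the type in \utC.
\item If $w\in B$, then $a\ra w$, $a\ra b\ra w$ is a forbidden triangle of the type in \utA.
\end{itemize}
Hence $N^{+}(a)\setminus\{b\}\subseteq T$. The dual argument gives $N^{-}(b)\setminus\{a\}\subseteq T$. These two sets are disjoint, since a common vertex $w$ would give $a\ra w\ra b$ together with $a\ra b$, again the forbidden triangle \utA. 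Therefore $2(z-1)\leq m$, contradicting $m<2z-2$. We conclude that $m\geq r$, that is, $|G_{0}|\geq 3r+2z+2$.

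\emph{The two implications.} Let $r\leq 2z-2$. Either $n(r,z,4)=2r+4z\geq 3r+2z+2$, or a cage has order below $2r+4z$ and the argument above gives $n(r,z,4)\geq 3r+2z+2$; both cases give the bound. Now let $r\geq 2z-2$. A cage of order below $2r+4z$ would have order at least $3r+2z+2\geq 2r+4z$, which is impossible. So $n(r,z,4)\geq 2r+4z$, and Theorem~\ref{JJ_bound} gives equality.

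\emph{The case $(r,z)=(1,2)$.} The upper bound $n(1,2,4)\leq 8$ comes from Theorem~\ref{thm:MCirc}, since $N(2,4)=8$. For the lower bound, Theorem~\ref{thm:EB} gives only $6$, so orders $6$ and $7$ (that is, $m\in\{0,1\}$) must be excluded. I would do this by a direct case analysis in the spirit of Lemma~\ref{lem:m+1}.
\begin{itemize}
\item For $m=0$, every vertex is forced into $A\cup P\cup Q\cup B\cup\{a,b\}$. Chasing the second out-arc of $a$ and the second in-arc of $b$ should quickly produce a $3$-cycle.
\item For $m=1$, the same chase applies with the single vertex $t_{1}$ absorbing at most one of these arcs. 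Lemma~\ref{lem:NoTT3} applies here since $r\geq m$, so \utA is forbidden.
\end{itemize}
This small case, where Lemma~\ref{lem:m+1} is unavailable, is the step I expect to be the main obstacle. If the hand analysis becomes unwieldy, the fallback is the exhaustive computation reported in Table~\ref{tab:g=4}.
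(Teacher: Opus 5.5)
For $(r,z)\neq(1,2)$ your argument is correct and essentially the paper's. The paper takes $m<\min(r,2z-2)$, invokes Lemmas~\ref{lem:NoTT3} and~\ref{lem:m+1}, and observes that $N(a),N^{+}(a),N^{-}(a),N(b),N^{+}(b),N^{-}(b)$ are pairwise disjoint, so $|G|\geq 2r+4z$. Your observation that $N^{+}(a)\setminus\{b\}$ and $N^{-}(b)\setminus\{a\}$ are disjoint subsets of $T$ is the same count relative to the EB tree. Your derivation of the two implications also matches.

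Your treatment of $(r,z)=(1,2)$, however, rests on an arithmetic slip. Theorem~\ref{thm:EB} gives $\EB(1,2,4)=2r+2z+2=8$, not $6$. Indeed the tree of Figure~\ref{fig:G0} has $|A|+|P|+|Q|+|B|+2=2+1+1+2+2=8$ vertices, so $m=0$ means order $8$, not $6$. Together with $n(1,2,4)\leq N(2,4)=8$ from Theorem~\ref{thm:MCirc} (or [{\sc ahm7}]), this gives $n(1,2,4)=8$ at once, which is exactly the paper's proof; there is no obstacle.

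Moreover, the chase you sketch could not succeed as written. For $m=0$ you propose to show that a graph whose vertices all lie in $A\cup P\cup Q\cup B\cup\{a,b\}$ must contain a $3$-cycle. That amounts to ruling out a $(1,2,4)$-mixed graph of order $8$, but $M(2,4)=\overrightarrow{C}_{8}(1;\pm 4;5)$ is one. In it, the second out-arc $0\ra 5$ of $a=0$ (with $b=1$) lands in $Q=\{5\}$ and yields only a non-cyclic triangle, not a $3$-cycle. Replace that paragraph with the one-line computation of $\EB(1,2,4)$.
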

\begin{proof}
  $n(1,2,4)= 8$ follows from Theorem~\ref{thm:EB} and [{\sc ahm7}].

  Assume $(r,z)\neq (1,2)$ and take $m\geq 0$ such that
  $m<\min(r,2z-2)$. Then we have $r\geq m+1$ and $m<2z-2$. Hence, both
  Lemmas~\ref{lem:NoTT3} and~\ref{lem:m+1} apply here and all
  non-cyclic triangles are forbidden in any $(r,z,4)$-mixed graph $G$
  of order $|G| = 2r+2z+2+m <2r+4z$. But then, given any arc $a\ra b$,
  all these sets must be pairwise disjoint:
  $N(a),N^{+}(a),N^{-}(a),N(b),N^{+}(b),N^{-}(b)$, and the union of
  those must contain $2r+4z$ vertices. Hence $|G|\geq 2r+4z$. A
  contradiction.

  It follows that any $(r,z,4)$-mixed graph $G$ of order $|G| = 2r+2z+2+m$
  must have $m\geq \min(r,2z-2)$. When $r\leq 2z-2$ this means
  $n(r,z,4)\geq 3r+2z+2$ as claimed. Otherwise we have $r\geq 2z-2$,
  and hence $n(r,z,4)\geq 2r+4z$, which together with
  Theorem~\ref{JJ_bound} gives us $n(r,z,4)=2r+4z$.
\end{proof}











\section{Computational results}\label{sec:exp}

Tables~\ref{tab:g=3}--\ref{tab:g=10} present our computational
results. We used exhaustive backtracking searches with symmetry
reductions, as explained in detail in \cite{DP26}. Our code can be
consulted in \cite{MTCAGES11}, which is an adaptation of the code we
developed for (normal) cages \cite{CAGES10}. Our algorithms ran on an
Intel Core i7-10700K at 3.8~GHz, always using a single core. We
granted one hour of running time for each case, and some cases were
granted additional time (up to 39.3 days) when they seemed
promising. We also found out that it is often the case that when one
mixed cage exists, many more (up to thousands) also exist. Hence we
also put a limit of 20 mixed cages per case and the computation was
ended when this limit was reached. Our code was written in
GAP~\cite{GAP414} using YAGS~\cite{YAGS6} and
\texttt{nauty}~\cite{MP14,MP21}.

In the tables, an entry like \cell{N}{e}{s} means that there are
exactly $s$ mixed cages of order $N$, with an excess of $e$ with
respect to the extended bound $\EB(r,z,g)$ in Theorem~\ref{thm:EB}
($e=N-\EB(r,z,g)$). The subscript $e$ is omitted when its value is
zero. An entry like \cell{N}{e}{s\smp} means that we found $s$ mixed
cages but the search space was not exhausted in the allotted time and
hence additional mixed cages may exist. An entry like \cell{N}{e}{+}
means that our algorithms did not find any mixed cages but the search
space was exhausted up to $N-1$ so that any mixed cage with the
corresponding parameters must have order at least $N$. Also, in
Table~\ref{tab:g=4}, we shaded the cells where the condition
$r\geq 2z-2$ is met and hence the exact values of $n(r,z,4)$ are known
from Theorem~\ref{thm:2z-2}. Moreover, we write $N$ in boldface
whenever we know that a mixed cage of that order exists and hence the
value of $N=n(r,z,g)$ is exact: in most cases this is because our
algorithms did find at least one mixed cage, but in six special cases
the reason is that our algorithmic lower bound matched some previous
upper bound. These six cases are: $n(1,7,4)=24$ [{\sc ahm6}];
$n(11,z,4)=22+4z$ for $z\in\{2,3,4,5\}$ [{\sc jj1}]; and $n(2,4,5)=25$
[{\sc adg3}], [{\sc e3}]. Note that there are four values in the
shaded area of Table~\ref{tab:g=4} where our algorithms could not achieve
the exact values known from our Theorem~\ref{thm:2z-2}.

All of our computational results were verified to be compatible with
all our theorems and the known results in the literature
\cite{AHM19,ADG22,Exo23,JJ25,AM26,AM26a, JJKP25} (see
Tables~\ref{tab:ref1} and~\ref{tab:ref2}). Table~\ref{tab:g=3} is only
a fraction of what can be computed with our algorithms, since the full
table would be too big to reproduce here (for instance, $n(6,10,3)=27$
was computed in only 25.8 seconds).

Among the many new mixed cages and lower bounds reported in
Tables~\ref{tab:g=3} to~\ref{tab:g=10}, we were able to confirm that
there are exactly 23 $(3,1,5)$-mixed cages of order 24 as reported by
Exoo in \cite{Exo23}, but we also improved all six lower bounds
(compare [{\sc e10}]--[{\sc e15}] with [{\sc dp8}]--[{\sc dp13}]) and
determined the exact value of $n(r,z,g)$ for two of those cases,
namely $n(4,1,6)=48$ and $n(3,1,8)=76$. Of course, computational
techniques are unable to reproduce theoretical results that apply for
an infinite number of parameter triples, like those in [{\sc ahm2}],
[{\sc ahm4}], [{\sc ahm5}], [{\sc e1}] and [{\sc e2}]; but besides
that, our algorithms were also able to reproduce almost all previously
known exact values [{\sc e6}]--[{\sc e9}], with the only exception
being [{\sc adg4}], which was obtained by theoretical arguments.

We point out that $n(r,z,g)$ is not strictly monotone in $z$, as
$n(2,3,4)=n(2,4,4)=16$ and $n(2,4,5)=n(2,5,5)=25$.  Also, for $g=3$
and $g=4$, we find it interesting that in most cases where we were
able to compute a mixed cage, the limit of 20 mixed cages was reached.

\setlength{\tabcolsep}{3.5pt}

\begin{table}[!ht]
\begin{tabular}{|>{\columncolor{azulito}}r||r|r|r|r|r|r|r|r|r|r|}
\hline 
\rowcolor{azulito}
$r\backslash z$ &1 &2 &3 &4 &5 &6 &7 &8 &9 &10\\ \hline\hline
1 & \cellb{4}{}{1}  & \cellb{6}{}{4}  & \cellb{8}{}{20\smp}  & \cellb{10}{}{20\smp}  & \cellb{12}{}{20\smp}  & \cellb{14}{}{20\smp}  & \cellb{16}{}{20\smp}  & \cellb{18}{}{20\smp}  & \cellb{20}{}{20\smp}  & \cellb{22}{}{20\smp} \\\hline
2 & \cellb{5}{}{1}  & \cellb{7}{}{9}  & \cellb{9}{}{20\smp}  & \cellb{11}{}{20\smp}  & \cellb{13}{}{20\smp}  & \cellb{15}{}{20\smp}  & \cellb{17}{}{20\smp}  & \cellb{19}{}{20\smp}  & \cellb{21}{}{20\smp}  & \cellb{23}{}{20\smp} \\\hline
3 & \cellb{6}{}{2}  & \cellb{8}{}{20\smp}  & \cellb{10}{}{20\smp}  & \cellb{12}{}{20\smp}  & \cellb{14}{}{20\smp}  & \cellb{16}{}{20\smp}  & \cellb{18}{}{20\smp}  & \cellb{20}{}{20\smp}  &
 \cellb{22}{}{20\smp}  & \cellb{24}{}{20\smp} \\\hline
4 & \cellb{7}{}{2}  & \cellb{9}{}{20\smp}  & \cellb{11}{}{20\smp}  & \cellb{13}{}{20\smp}  & \cellb{15}{}{20\smp}  & \cellb{17}{}{20\smp}  & \cellb{19}{}{20\smp}  & \cellb{21}{}{20\smp}  &
 \cellb{23}{}{20\smp}  & \cellb{25}{}{20\smp} \\\hline
5 & \cellb{8}{}{3}  & \cellb{10}{}{20\smp}  & \cellb{12}{}{20\smp}  & \cellb{14}{}{20\smp}  & \cellb{16}{}{20\smp}  & \cellb{18}{}{20\smp}  & \cellb{20}{}{20\smp}  & \cellb{22}{}{20\smp}  
& \cellb{24}{}{20\smp}  & \cellb{26}{}{20\smp} \\\hline
6 & \cellb{9}{}{4}  & \cellb{11}{}{20\smp}  & \cellb{13}{}{20\smp}  & \cellb{15}{}{20\smp}  & \cellb{17}{}{20\smp}  & \cellb{19}{}{20\smp}  & \cellb{21}{}{20\smp}  & \cellb{23}{}{20\smp}  
& \cellb{25}{}{20\smp}  & \cellb{27}{}{20\smp} \\\hline
\end{tabular}
\caption{$n(r,z,3)$.}\label{tab:g=3}
\end{table}

\begin{table}[!ht]
\begin{tabular}{|>{\columncolor{azulito}}r||r|r|r|r|r|r|r|r|r|}
\hline 
\rowcolor{azulito}
$r\backslash z$&1 &2 &3 &4 &5 &6 &7 &8 &9\\ \hline\hline
1 & \cellshbb{6}{}{1}  & \cellbb{8}{}{1}  & \cellbb{12}{2}{1}  & \cellbb{14}{2}{1}  & \cellb{18}{4}{1\smp}  & \cellbb{20}{4}{1}  & \cellb{24}{6}{+}  & \cellb{26}{6}{1\smp}  & \cell{28}{6}{+} \\\hline
2 & \cellshbb{8}{}{2}  & \cellshb{12}{2}{20\smp}  & \cellb{16}{4}{20\smp}  & \cellbb{16}{2}{1}  & \cellb{20}{4}{1\smp}  & \cell{24}{6}{+}  & \cell{25}{5}{+}  & \cell{25}{3}{+}  & \cell{27}{3}{+} \\\hline
3 & \cellshbb{10}{}{2}  & \cellshb{14}{2}{20\smp}  & \cellb{18}{4}{20\smp}  & \cell{20}{4}{+}  & \cell{22}{4}{+}  & \cell{24}{4}{+}  & \cell{26}{4}{+}  & \cell{26}{2}{+}  & \cell{28}{2}{+} \\\hline
4 & \cellshbb{12}{}{4}  & \cellshb{16}{2}{20\smp}  & \cellshb{20}{4}{20\smp}  & \cell{21}{3}{+}  & \cell{23}{3}{+}  & \cell{25}{3}{+}  & \cell{27}{3}{+}  & \cell{29}{3}{+}  & \cell{31}{3}{+} \\\hline
5 & \cellshbb{14}{}{4}  & \cellshb{18}{2}{20\smp}  & \cellshb{22}{4}{20\smp}  & \cell{24}{4}{+}  & \cell{26}{4}{+}  & \cell{28}{4}{+}  & \cell{30}{4}{+}  & \cell{32}{4}{+}  & \cell{34}{4}{+} \\\hline
6 & \cellshbb{16}{}{7}  & \cellshb{20}{2}{20\smp}  & \cellshb{24}{4}{20\smp}  & \cellsh{27}{5}{+}  & \cell{28}{4}{+}  & \cell{29}{3}{+}  & \cell{31}{3}{+}  & \cell{33}{3}{+}  & \cell{34}{2}{+} \\\hline
7 & \cellshbb{18}{}{8}  & \cellshb{22}{2}{20\smp}  & \cellshb{26}{4}{20\smp}  & \cellshb{30}{6}{20\smp}  & \cell{32}{6}{+}  & \cell{34}{6}{+}  & \cell{34}{4}{+}  & \cell{34}{2}{+}  & \cell{36}{2}{+} \\\hline
8 & \cellshbb{20}{}{12}  & \cellshb{24}{2}{20\smp}  & \cellshb{28}{4}{20\smp}  & \cellshb{32}{6}{20\smp}  & \cellsh{34}{6}{+}  & \cell{35}{5}{+}  & \cell{37}{5}{+}  & \cell{36}{2}{+}  & \cell{38}{2}{+} \\\hline
9 & \cellshbb{22}{}{14}  & \cellshb{26}{2}{20\smp}  & \cellshb{30}{4}{20\smp}  & \cellshb{34}{6}{20\smp}  & \cellshb{38}{8}{20\smp}  & \cell{38}{6}{+}  & \cell{40}{6}{+}  & \cell{42}{6}{+}  & \cell{40}{2}{+} \\\hline
10 & \cellshb{24}{}{20\smp}  & \cellshb{28}{2}{20\smp}  & \cellshb{32}{4}{20\smp}  & \cellshb{36}{6}{20\smp}  & \cellshb{40}{8}{20\smp}  & \cellsh{41}{7}{+}  & \cell{42}{6}{+}  & \cell{43}{5}{+}  & \cell{44}{4}{+} \\\hline
11 & \cellshb{26}{}{20\smp}  & \cellshb{30}{2}{+}  & \cellshb{34}{4}{+}  & \cellshb{38}{6}{+}  & \cellshb{42}{8}{+}  & \cellsh{44}{8}{+}  & \cell{44}{6}{+}  & \cell{46}{6}{+}  & \cell{46}{4}{+} \\\hline\end{tabular}
\caption{$n(r,z,4)$.}\label{tab:g=4}
\end{table}

\begin{table}[!ht]
\begin{minipage}[t]{4cm}
\begin{tabular}[t]{|>{\columncolor{azulito}}r||r|r|r|r|r|r|r|r|r|}
\hline
\rowcolor{azulito}
$r\backslash z$ &1 &2 &3 &4 &5  &6 \\ \hline\hline
1 & \cellbb{8}{}{1}  & \cellbb{10}{}{1}  & \cellbb{16}{4}{1}  & \cellbb{18}{4}{1}  & \cell{24}{8}{+}  & \cell{24}{6}{+} \\\hline
2 & \cellbb{13}{}{1}  & \cellbb{19}{4}{1}  & \cell{24}{7}{+}  & \cellb{25}{6}{+}  & \cellb{25}{4}{1\smp}  & \cell{27}{4}{+} \\\hline
3 & \cellbb{24}{4}{23}  & \cell{28}{6}{+}  & \cell{32}{8}{+}  & \cell{30}{4}{+}  & \cell{30}{2}{+}  & \cell{32}{2}{+} \\\hline
4 & \cell{31}{2}{+}  & \cell{37}{6}{+}  & \cell{44}{11}{+}  & \cell{42}{7}{+}  & \cell{39}{2}{+}  & \cell{41}{2}{+} \\\hline
5 & \cell{42}{2}{+}  & \cell{50}{8}{+}  & \cell{58}{14}{+}  & \cell{60}{14}{+}  & \cell{58}{10}{+}  & \cell{52}{2}{+} \\\hline
6 & \cell{54}{1}{+}  & \cell{65}{10}{+}  & \cell{74}{17}{+}  & \cell{74}{15}{+}  & \cell{74}{13}{+}  & \cell{67}{4}{+} \\\hline
\end{tabular}
\captionof{table}{$n(r,z,5)$.}\label{tab:g=5}
\end{minipage}\hspace{3cm}%
\begin{minipage}[t]{4cm}
\begin{tabular}[t]{|>{\columncolor{azulito}}r||r|r|r|r|r|}
\hline
\rowcolor{azulito}
$r\backslash z$ &1 &2 &3 &4  &5  \\ \hline\hline
1 & \cellb{10}{}{1}  & \cellb{12}{}{1}  & \cellb{20}{6}{1\smp}  & \cellb{22}{6}{1\smp}  & \cell{24}{6}{+} \\\hline
2 & \cellb{18}{}{3}  & \cellb{27}{7}{1}  & \cell{27}{5}{+}  & \cell{29}{5}{+}  & \cell{30}{4}{+} \\\hline
3 & \cellb{30}{}{1}  & \cell{38}{6}{+}  & \cell{40}{6}{+}  & \cell{38}{2}{+}  & \cell{40}{2}{+} \\\hline
4 & \cellb{48}{2}{2\smp}  & \cell{54}{6}{+}  & \cell{62}{12}{+}  & \cell{61}{9}{+}  & \cell{55}{1}{+} \\\hline
5 & \cell{68}{2}{+}  & \cell{76}{8}{+}  & \cell{86}{16}{+}  & \cell{86}{14}{+}  & \cell{82}{8}{+} \\\hline
6 & \cellb{90}{}{1}  & \cell{102}{10}{+}  & \cell{108}{14}{+}  & \cell{109}{13}{+}  & \cell{108}{10}{+} \\\hline
7 & \cell{118}{}{+}  & \cell{132}{12}{+}  & \cell{138}{16}{+}  & \cell{138}{14}{+}  & \cell{136}{10}{+} \\\hline
\end{tabular}
\captionof{table}{$n(r,z,6)$.}\label{tab:g=6}
\end{minipage}
\end{table}

\begin{table}[!ht]
\begin{minipage}[t]{4cm}
\begin{tabular}[t]{|>{\columncolor{azulito}}r||r|r|r|r|}
\hline
\rowcolor{azulito}
$r\backslash z$ &1 &2 &3 &4\\ \hline\hline
1 & \cellbb{12}{}{1}  & \cellbb{14}{}{1}  & \cell{24}{8}{+}  & \cell{24}{6}{+} \\\hline
2 & \cellbb{25}{}{1}  & \cell{33}{6}{+}  & \cell{33}{4}{+}  & \cell{35}{4}{+} \\\hline
3 & \cell{56}{4}{+}  & \cell{60}{6}{+}  & \cell{62}{6}{+}  & \cell{62}{4}{+} \\\hline
4 & \cell{100}{1}{+}  & \cell{111}{10}{+}  & \cell{115}{12}{+}  & \cell{116}{11}{+} \\\hline
\end{tabular}
\captionof{table}{$n(r,z,7)$.}\label{tab:g=7}
\end{minipage}\hspace{3cm}%
\begin{minipage}[t]{4cm}
\begin{tabular}[t]{|>{\columncolor{azulito}}r||r|r|r|r|}
\hline
\rowcolor{azulito}
$r\backslash z$ &1 &2 &3 &4 \\ \hline\hline
1 & \cellb{14}{}{1}  & \cellb{16}{}{1}  & \cell{24}{6}{+}  & \cell{26}{6}{+} \\\hline
2 & \cellb{32}{}{4}  & \cell{40}{6}{+}  & \cell{40}{4}{+}  & \cell{42}{4}{+} \\\hline
3 & \cellb{76}{2}{1\smp}  & \cell{82}{6}{+}  & \cell{84}{6}{+}  & \cell{84}{4}{+} \\\hline
4 & \cell{152}{}{+}  & \cell{163}{9}{+}  & \cell{167}{11}{+}  & \cell{168}{10}{+} \\\hline
\end{tabular}
\captionof{table}{$n(r,z,8)$.}\label{tab:g=8}
\end{minipage}
\end{table}

\begin{table}[!ht]
\begin{minipage}[t]{4cm}
\begin{tabular}[t]{|>{\columncolor{azulito}}r||r|r|r|}
\hline
\rowcolor{azulito}
$r\backslash z$ &1 &2  &3 \\ \hline\hline
1 & \cellb{16}{}{1}  & \cellb{18}{}{1}  & \cell{26}{6}{+} \\\hline
2 & \cellb{41}{}{1}  & \cell{51}{8}{+}  & \cell{49}{4}{+} \\\hline
3 & \cell{120}{}{+}  & \cell{130}{8}{+}  & \cell{132}{8}{+} \\\hline
\end{tabular}
\captionof{table}{$n(r,z,9)$.}\label{tab:g=9}
\end{minipage}\hspace{3cm}%
\begin{minipage}[t]{4cm}
\begin{tabular}[t]{|>{\columncolor{azulito}}r||r|r|r|}
\hline
\rowcolor{azulito}
$r\backslash z$ &1 &2 &3   \\ \hline\hline
1 & \cellb{18}{}{1}  & \cellb{20}{}{1}  & \cell{28}{6}{+} \\\hline
2 & \cellb{50}{}{5}  & \cell{58}{6}{+}  & \cell{58}{4}{+} \\\hline
3 & \cell{166}{}{+}  & \cell{176}{8}{+}  & \cell{178}{8}{+} \\\hline
\end{tabular}
\captionof{table}{$n(r,z,10)$.}\label{tab:g=10}
\end{minipage}
\end{table}
\clearpage\small
\providecommand{\nolinkurl}[1]{\texttt{\detokenize{#1}}}
\providecommand{\url}[1]{\nolinkurl{#1}}
\providecommand{\mapurl}[1]{\ifx\href\undefined\url{#1}\else\href{#1}{\nolinkurl{#1}}\fi}
\providecommand{\mapdoi}[1]{\ifx\href\undefined
  doi:\nolinkurl{#1}\else\href{https://doi.org/#1}{doi:\nolinkurl{#1}}\fi}

\end{document}